\documentclass[12pt]{amsart}
\usepackage[margin=.85in]{geometry}
\usepackage{amsmath,amsthm,amssymb}
\usepackage{comment}
\usepackage{hyperref}
\usepackage{color}
\newcommand{\stkout}[1]{\ifmmode\text{\sout{\ensuremath{#1}}}\else\sout{#1}\fi}
\usepackage{graphicx}

\vfuzz2pt 
\hfuzz2pt 
\theoremstyle{plain}
\newtheorem{thm}{Theorem}[section]
\newtheorem{cor}[thm]{Corollary}
\newtheorem{lem}[thm]{Lemma}
\newtheorem{prop}[thm]{Proposition}
\newtheorem{open}[thm]{Open Question}

\theoremstyle{definition}
\newtheorem{defn}[thm]{Definition}

\theoremstyle{remark}
\newtheorem{rem}[thm]{Remark}
\newtheorem{ex}[thm]{Example}
\numberwithin{equation}{section}

\newcommand{\R}{\mathbb R}
\newcommand{\N}{\mathbb N}
\newcommand{\Z}{\mathbb Z}
\newcommand{\cH}{{\mathcal H}}
\newcommand{\CF}{{\mathcal F}}
\newcommand{\cF}{{\mathcal F}}
\newcommand{\cR}{{\mathcal R}}
\newcommand{\cM}{{\mathcal M}}
\newcommand{\cB}{{\mathcal B}}
\newcommand{\CW}{{\mathcal B}}
\newcommand{\cD}{{\mathcal D}}
\newcommand{\CD}{{\mathcal D}}
\newcommand{\cC}{{\mathcal C}}
\newcommand{\la}{\langle}
\newcommand{\bn}{{\bf n}}
\newcommand{\ra}{\rangle}
\newcommand{\re}{{\rm Re}}
\newcommand{\be}[1]{\begin{equation}\label{#1}}
\newcommand{\ee}{\end{equation}}
\newcommand{\ba}{\begin{align*}}
\newcommand{\ea}{\end{align*}}
\newcommand{\bigo}[1]{O\left( #1 \right)}
\newcommand{\smallo}[1]{o\left( #1 \right)}

\newcommand{\de}{\, \mathrm{d}}
\begin{document}
\title{Eigenvalue bounds of mixed Steklov problems\\
}%
\author{Asma Hassannezhad}%
\address
{Asma Hassannezhad: University of Bristol,
School of Mathematics,
University Walk,
Bristol BS8 1TW, UK}
\email{asma.hassannezhad@bristol.ac.uk}
\author{Ari Laptev}
\address
{Ari Laptev: Imperial College London, Department of Mathematics, 180 Queen's Gate, 
London SW7 2AZ, UK}
\email{a.laptev@imperial.ac.uk}
\address{Francesco Ferrulli: Imperial College London, Department of Mathematics, 180 Queen's Gate, 
London SW7 2AZ, UK}
\email{f.ferrulli14@imperial.ac.uk}
\address{Jean Lagac\'e: University College London, Department of Mathematics, Gower Street, London, WC1E 6BT, UK}
\email{j.lagace@ucl.ac.uk}

\setcounter{page}{1}
\maketitle
\centerline{\small(with an appendix by F. Ferrulli and J. Lagac\'e)}

\begin{abstract} We study bounds on the Riesz means of the mixed Steklov--Neumann and Steklov--Dirichlet eigenvalue problem on a  bounded domain $\Omega$ in $\R^n$. The Steklov--Neumann eigenvalue problem is also called the sloshing problem. We obtain two-term asymptotically sharp lower bounds on the Riesz means  of the sloshing problem and also provide an asymptotically sharp upper bound for the Riesz means   of mixed Steklov--Dirichlet problem. The proof of our results for the sloshing problem uses the average variational principle and monotonicity of sloshing eigenvalues. In the case of Steklov-Dirichlet eigenvalue problem, the proof is based on  a well--known bound on the Riesz means of the Dirichlet fractional Laplacian, and an inequality between the Dirichlet and Navier fractional Laplacian.  The two-term asymptotic results for the Riesz means of mixed Steklov eigenvalue problems are discussed in the appendix which in particular show the asymptotic sharpness of the bounds we obtain.    \end{abstract}
\section{Introduction} 
Let $\Omega$ be a bounded domain  in $\R^{n}$ with Lipschitz and piecewise smooth boundary $\partial\Omega$. We assume that 
\begin{equation}\label{hypothesis}
\partial \Omega=\cF\cup \cB,\quad \text{where $\cF\subset \{x_n=0\}$, and $\cB\subset\{x_n<0\}$}.
\end{equation}  
Throughout the paper, we refer to $\cF$ as a subset of $\R^{n-1}\times\{0\}$ and as a subset of $\R^{n-1}$ interchangeably. Consider the following eigenvalue problem 
\be{slosh}\begin{cases}
\Delta f=0,\,& \hbox{in $\Omega$},\\
\frac{\partial f}{\partial \bn}=0,\,&\hbox{on $\cB$},\\
\frac{\partial f}{\partial x_n}=\nu f,\,&\hbox{on $\cF$},
\end{cases}\ee
where $\bn$ is the unite outward normal vector along $\partial \Omega$, and $\frac{\partial f}{\partial \bn}$ is the derivative of $f$ in the direction of $\bn$.  The above mixed Steklov--Neumann eigenvalue problem is also called the sloshing problem. It is known
 that it has a discrete set of eigenvalues  (see for example~\cite[Chapter III]{ban80})
$$0=\nu_1\le\nu_2\le\nu_3\le\cdots\nearrow\infty$$
and each eigenvalue has a finite multiplicity. The corresponding eigenfunctions $\{\varphi_j\}_{j=1}^\infty$ restricted to the free surface $\cF$ form a basis for $L^2(\cF)$. The eigenvalues of the sloshing problem can be considered as the eigenvalues of the Dirichlet--to--Neumann map 
$$
\cD_N:L^2(\cF)\to L^2(\cF),
$$
$$
f\mapsto\frac{\partial \tilde f}{\partial \bn},
$$
where $\tilde f$ is the harmonic extension of $f$ to $\Omega$ satisfying the Neumann boundary condition on $\cB$.\\
The sloshing problem naturally appears in the study of the sloshing liquid, where the sloshing frequency is proportional to $\sqrt{\nu_j}$ and its study has a long history.   For a short historical note we refer to~\cite{history}, and for more recent developments on the subject to~\cite{Petal10,LPPS} and the references therein.\\ 
The focus of our  study  is to find  sharp semiclassical lower/upper bounds for the Riesz means of eigenvalues of the mixed Steklov problems \eqref{slosh} and \eqref{dirstek} (see below).\\
The Riesz mean $R_\gamma(z)$ of  order $\gamma>0$ is defined as
\[
R_\gamma(z):=\sum_j(z-\nu_j)^\gamma_+,\qquad z>0,
\]
where $(z-\nu)_+:=\max\{0,z-\nu\}$. We may also denote it by $R^\Omega_\gamma(z,\cD_N)$ to identify the domain and the operator under consideration.\\
When $\gamma\to0$, it approaches the counting function 
$$
N(z):=\sum_{\nu_j< z}1=\sup\{k:\nu_k< z\}
$$
and by convention we denote $R_0(z):=N(z)$. The asymptotics of the counting function $N(z)$ for the eigenvalue problem \eqref{slosh} is given by (see for example~\cite{San55})
\[
N(z)\sim \frac{\omega_{n-1}}{(2\pi)^{n-1}}|\cF|z^{n-1},\qquad z\nearrow \infty,
\]
where $\omega_{n-1}=\frac{\pi^{\frac{n-1}{2}}}{\Gamma(\frac{n+1}{2})}$ is the volume of a unit ball in $\R^{n-1}$, and $|\cF|$ denote the $(n-1)$-Euclidean volume of $\cF$. Using the Riesz iteration, i.e. the following identities 
 \begin{equation}\label{iteration}
R_{\gamma+ \rho }(z)= \dfrac{\Gamma(\gamma+\rho+1)}
{\Gamma(\gamma+1) \ \Gamma(\rho)} \int_{0}^{\infty}
\left(z-t\right)_{+}^{\rho-1} R_{\gamma}(t) dt
\end{equation}
and
  \[ 
 R_{\gamma}(z)= \gamma \int_{0}^{\infty}
\left(z-t\right)_{+}^{\gamma-1} R_{0}(t) dt=\gamma \int_{0}^{z}
\left(z-t\right)^{\gamma-1} R_{0}(t) dt,
\]
 we can immediately get the asymptotics behaviour of $R_\gamma(z)$
\begin{equation}\label{rieszasym}
R_\gamma(z)\sim C_{n,\gamma}|\cF|z^{n+\gamma-1},\qquad z\nearrow\infty,
\end{equation}
where 
$$
C_{n,\gamma}:=\frac{1}{(4\pi)^{\frac{n-1}{2}}}\frac{\Gamma(\gamma+1)\Gamma(n)}{\Gamma(\frac{n+1}{2})\Gamma(n+\gamma)}.
$$
For  basic facts on the Riesz means, we refer to~\cite{CM52,HH11}.
Sharp semiclassical bounds on the Riesz means of Dirichlet and Neumann eigenvalues of the Laplacian were studied in numerous work, see for example~\cite{Lap97,GLW11,HS16,LY83}. Recently, a sharp semiclassical bound for the Riesz means $R_\gamma(z)$, $\gamma\ge2$, of Steklov eigenvalues was obtained in~\cite{PS16}. However,  such sharp semiclassical bounds for Riesz means  of the mixed Steklov problem are unknown. \\
 We  also consider the mixed Steklov--Dirichlet eigenvalue problem:   
\be{dirstek}\begin{cases}
\Delta f=0,\,& \hbox{in $\Omega$},\\
 f=0,\,&\hbox{on $\cB$},\\
\frac{\partial f}{\partial x_n}=\eta f,\,&\hbox{on $\cF$},
\end{cases}\ee
where instead of the Neumann boundary condition, the Dirichlet boundary condition is imposed  on $\cB$. For a physical interpretation of this problem, see for example~\cite{LPPS}.  The eigenvalues of the Steklov--Dirichlet problem can be also considered as the the eigenvalues of the Dirichlet--to--Neumann map 
$$\cD_D:L^2(\cF)\to L^2(\cF)
$$
$$
f\mapsto\frac{\partial \tilde f}{\partial \bn},
$$
where $\tilde f$ is the harmonic extension of $f$ to $\Omega$ satisfying the Dirichlet boundary condition on $\cB$. We  denote the Riesz means of eigenvalues of problem \eqref{dirstek} by $R^\Omega_\gamma(z,\cD_D)$, and when there is no confusion  by $R^\Omega_\gamma(z)$ or $R_\gamma(z)$. \\
In the following subsections, we state our main results on asymptotically sharp bounds on $R^\Omega_\gamma(z,\cD_N)$ and $R^\Omega_\gamma(z,\cD_D)$. {As a consequence, we also get asymptotically sharp bounds on the sum of first $k$ eigenvalues of the mixed Steklov problem. }
\subsection{Steklov--Neumann eigenvalue problem}\label{intro1}  Our first result gives a two--term asymptotically sharp lower bound on $R_\gamma(z,\cD_N)$ in dimension two. 
\begin{thm}\label{triangle} Let $\tilde\Omega$ be a bounded domain in $\R^2$ with $\partial\tilde\Omega=\cF\cup\tilde\cB$ as in \eqref{hypothesis}. We assume that  $\cF$ is connected and there exists $\delta>0$ such that $\tilde\cB$ meets $\cF$  in two line segments in a $\delta-$neighbourhood of  corner points\footnote{The points in the intersection $\bar\cF$ and $\bar\cB$ are called the corner points.} $P$ and $Q$ (as shown in Figure~\ref{pic1}) with angles $\alpha,\beta\in(0,\pi)$. We denote the complement of these two line segments in $\tilde\cB$  by $\tilde\cB_c$.  Then for every $\gamma\ge1$ and every  $z>0$ there exists a constant  $c=c(z,\gamma, \delta,\alpha,\beta,|\tilde\cB_c|)$  depending on $z,\gamma, \delta,\alpha, \beta$, and $|\tilde\cB_c|$ such that for any $\Omega\subset\tilde\Omega$ with $\partial\Omega=\cF\cup\cB$ the Riesz mean $R^\Omega_\gamma(z,\cD_N)$   satisfies the following inequality. 
\begin{equation}\label{1.4}
R^\Omega_\gamma(z,\cD_N)\ge C_{2,\gamma}|\cF|z^{\gamma+1}+\frac{1}{2\pi}\left(\frac{1}{\tan(\alpha)}+\frac{1}{\tan(\beta)}\right)z^\gamma+c,
\end{equation}
where $C_{2,\gamma}=\frac{1}{\pi(\gamma+1)}$. Moreover, $c=O(z^{\gamma-1})$ as $z\to\infty$.  Here our convention is that $\frac{1}{\tan(\pi/2)}=0$.
\end{thm}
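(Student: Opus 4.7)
The plan is to combine monotonicity of sloshing eigenvalues with the Berezin--Lieb form of the average variational principle, applied to the Dirichlet-to-Neumann operator $\cD_N$, and to extract the two-term structure of \eqref{1.4} from a geometric integral that encodes the corner contributions. From the Rayleigh quotient characterisation $\nu_k(\Omega) = \min_{V_k}\max_{f\in V_k} \int_\Omega|\nabla f|^2/\int_\cF|f|^2$ (with $V_k$ ranging over $k$-dimensional subspaces of $H^1$), restriction of test functions gives $\nu_k(\Omega) \le \nu_k(\tilde\Omega)$ whenever $\Omega \subset \tilde\Omega$ share the same free surface $\cF$; equivalently, $R_\gamma^\Omega(z) \ge R_\gamma^{\tilde\Omega}(z)$. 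It therefore suffices to prove \eqref{1.4} for $\Omega = \tilde\Omega$.

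Next, the plane-wave family $\{f_\xi(x) = e^{i\xi x}{\bf 1}_\cF\}_{\xi\in\R}$ equipped with the measure $d\xi/(2\pi)$ is a continuous tight frame of $L^2(\cF)$, each element having squared norm $|\cF|$. Since $t\mapsto (z-t)_+^\gamma$ is convex for $\gamma\ge 1$, the Berezin--Lieb inequality (Jensen's inequality against the spectral measure of $\cD_N$) gives
\[
R_\gamma^{\tilde\Omega}(z,\cD_N) \;\ge\; \frac{|\cF|}{2\pi} \int_\R \bigl( z - |\cF|^{-1}\langle \cD_N f_\xi, f_\xi\rangle\bigr)_+^\gamma\,d\xi.
\]
By Green's identity, $\langle \cD_N f_\xi, f_\xi\rangle = \int_{\tilde\Omega}|\nabla \tilde f_\xi|^2$, where $\tilde f_\xi$ is the harmonic extension of $f_\xi$ satisfying the Neumann condition on $\tilde\cB$. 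Applying Dirichlet's principle with the harmonic competitor $u(x,y) := e^{i\xi x + |\xi|y}$,
\[
\langle \cD_N f_\xi, f_\xi\rangle \;\le\; \int_{\tilde\Omega}|\nabla u|^2\,dx\,dy \;=\; 2\xi^2\, I(\xi), \qquad I(\xi) := \int_{\tilde\Omega} e^{2|\xi|y}\,dx\,dy.
\]

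The core computation is a two-term asymptotic for $I(\xi)$ that encodes the corner geometry. Setting $h := \delta\min(\sin\alpha,\sin\beta)$, the straight-segment hypothesis implies that for $y \in [-h,0]$ the horizontal slice $\{x : (x,y) \in \tilde\Omega\}$ is an interval of width $|\cF| + y(\cot\alpha+\cot\beta)$, while the contribution to $I(\xi)$ from $\tilde\Omega\cap\{y<-h\}$ is bounded by $|\tilde\Omega|\,e^{-2|\xi|h}$. A direct integration in $y$ then yields
\[
I(\xi) \;\le\; \frac{|\cF|}{2|\xi|} - \frac{\cot\alpha+\cot\beta}{4\xi^2} + O\!\bigl(e^{-2|\xi|h}\bigr)
\]
for $|\xi|$ large. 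Inserting this bound and performing the $\xi$-integration gives
\[
R_\gamma^{\tilde\Omega}(z) \;\ge\; \frac{|\cF|}{\pi(\gamma+1)}\Bigl(z + \tfrac{\cot\alpha+\cot\beta}{2|\cF|}\Bigr)^{\gamma+1} + O(z^{\gamma-1}),
\]
and the binomial expansion in $z$ produces the Weyl term $C_{2,\gamma}|\cF|z^{\gamma+1}$, the corner term $\tfrac{1}{2\pi}(\cot\alpha+\cot\beta)z^\gamma$, and an $O(z^{\gamma-1})$ remainder, exactly matching \eqref{1.4}. The main obstacle is making the error in the $I(\xi)$ estimate uniform in $\xi$, in particular absorbing contributions from the small-$|\xi|$ regime (where the large-$\xi$ expansion is invalid) and from the middle portion $\tilde\cB_c$ of the boundary into the $O(z^{\gamma-1})$ remainder; this is the source of the dependence of $c$ on $\delta$ and $|\tilde\cB_c|$.
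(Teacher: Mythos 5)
Your argument is structurally close to the paper's -- both reduce to $\tilde\Omega$ by monotonicity, both test with plane waves $e^{ix\xi+y|\xi|}$, and both exploit the convexity of $t\mapsto(z-t)_+^\gamma$ -- but the central estimate is carried out by a genuinely different route. The paper proves the $\gamma=1$ case from the averaged variational principle \eqref{mminimax} (so it never needs to pass through $\la\cD_N f_\xi,f_\xi\ra$ or invoke Dirichlet's principle), uses Green's identity to convert $\int_{\tilde\Omega}|\nabla f_\xi|^2$ into a boundary integral over $\partial\tilde\Omega$, and then splits $\tilde\cB$ into the two straight segments near $P,Q$ (which yield the $\frac{1}{2\pi}(\cot\alpha+\cot\beta)z$ term after an exact integration) and $\tilde\cB_c$ (which is absorbed into $c$ using only $|\tilde\cB_c|$ and the fact that $\tilde\cB_c$ lies at depth $\ge\delta$); Riesz iteration then gives $\gamma>1$. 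You instead keep the Dirichlet energy as the volume integral $2\xi^2 I(\xi)$ and estimate $I(\xi)$ by slicing $\tilde\Omega$ horizontally near $\cF$. The two computations are equivalent by the divergence theorem, but the paper's version wins on bookkeeping: the boundary integrand $r^{n-1}e^{2x_n r}\la\bn,e_n\ra$ is uniformly integrable for $r\in(0,z)$, so the small-$r$ regime causes no trouble, and the remainder depends only on $|\tilde\cB_c|$ and $\delta$, not on $|\tilde\Omega|$ as your deep-slice term $|\tilde\Omega|e^{-2|\xi|h}$ does.

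The issue you flag at the end is a genuine gap, not just a loose end. The bound $I(\xi)\le\frac{|\cF|}{2|\xi|}-\frac{\cot\alpha+\cot\beta}{4\xi^2}+\bigo{e^{-2|\xi|h}}$ with a bounded implied constant is actually false for $|\xi|\lesssim 1/h$ when $\cot\alpha+\cot\beta>0$: the right-hand side tends to $-\infty$ as $\xi\to0$ while $I(\xi)\to|\tilde\Omega|>0$. The fix is to note that, after integrating the slice exactly from $y=-h$ to $0$, the remainder $\psi(\xi):=\frac{2\xi^2 I(\xi)}{|\cF|}-|\xi|+\frac{\cot\alpha+\cot\beta}{2|\cF|}$ satisfies $|\psi(\xi)|\le C(1+|\xi|)e^{-2|\xi|h}$ for \emph{all} $\xi$ (the apparent singularities at small $\xi$ cancel against $e^{-2|\xi|h}$), so that $\int_\R(z-|\xi|+a)_+^{\gamma-1}\psi_+(\xi)\,d\xi\le(z+a)^{\gamma-1}\int_\R\psi_+ =\bigo{z^{\gamma-1}}$ with a constant controlled by $|\tilde\Omega|$, $h$, $\alpha$, $\beta$. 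With that repair (and a bound on $|\tilde\Omega|$ in terms of $|\cF|$, $\delta$, $\alpha$, $\beta$, $|\tilde\cB_c|$ to match the stated dependencies of $c$), your route does deliver \eqref{1.4}; the paper simply sidesteps all of this by staying on the boundary.
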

\begin{figure}
\centering{
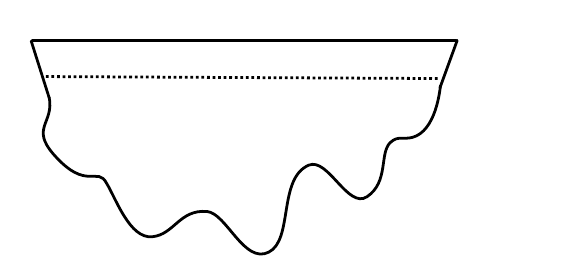
\caption{}\label{pic1}}
\end{figure}

\begin{rem}
Note that  for a fixed $z$,  when $\delta$ tends to $0$, constant $c$ in inequality \eqref{1.4} may tend to~$-\infty$. However, there exists a contant $z_0$  depending on $\gamma, \delta,\alpha, \beta$, and $|\tilde\cB_c|$ with $(\alpha,\beta)\ne(\frac{\pi}{2},\frac{\pi}{2})$ such that for every $z\ge z_0$ and $\gamma\ge1$, and for any  domain $\Omega$ satisfying the assumption of Theorem~\ref{triangle},
 we have,
\[
R^\Omega_\gamma(z,\cD_N)\ge C_{2,\gamma}|\cF|z^{\gamma+1}+\frac{1}{4\pi}\left(\frac{1}{\tan(\alpha)}+\frac{1}{\tan(\beta)}\right)z^\gamma.
\]
\end{rem}
When $\tilde\Omega$ is a triangle/trapezoid, then for $\delta$ equal to the height of the triangle/trapezoid, we have $\tilde\cB_c=\emptyset$, and constant $c$ in $\eqref{1.4}$ is equal to
$$c=\frac{1}{4\pi\delta}\left(\frac{1}{\tan(\alpha)}+\frac{1}{\tan(\beta)}\right)(1-e^{-2\delta z}).$$
 The coefficient  of $z^\gamma$ in  \eqref{1.4} is zero when $(\alpha,\beta)=(\frac{\pi}{2},\frac{\pi}{2})$.
 When $\Omega$ is a subset of the infinite strip $\cF\times(-\infty,0)$, we say that $\Omega$ satisfies the so-called \textit{John condition}, see \cite{Petal10}. We show that when $\Omega$ satisfies the John condition then we {get a uniform lower bound with}  $\frac{1}{2}z^\gamma$ in the second term:
\begin{equation}\label{recintro}R^\Omega_\gamma(z,\cD_N)\ge\frac{|\cF|}{\pi(\gamma+1)}z^{\gamma+1}+\frac{1}{2}z^{\gamma}.\end{equation}
We remark that inequality \eqref{recintro} is not a consequence of Theorem \ref{triangle}, see Proposition~\ref{correctangle}.\\

  In Theorem \ref{triangle},  when $\cB$ and $\tilde\cB$ are tangent to each other at points $P$ and $Q$, then the coefficient of $z^\gamma$ depends only  on the interior angles between $\cF$ and $\cB$. \\
  Recently,
  Levitin, Parnovski, Polterovich and Sher in~\cite[Theorem 1.2.2]{LPPS}  showed  that when $\alpha,\beta\in(0,\frac{\pi}{2})$ are the interior angles between $\cF$ and $\cB$, and $k\nearrow \infty$, the following asymptotic expansion holds 
\begin{equation}\label{2asym0}
\nu_k|\cF|= \pi k-\frac{\pi}{2}-\frac{\pi^2}{8}\left(\frac{1}{\alpha}+\frac{1}{\beta}\right)+o(1).
\end{equation}
Their result in particular proves  Fox-Kuttler's conjecture in 1983~\cite{FK83}. From \eqref{2asym0}, one can deduce $N(z)= \frac{1}{\pi}|\cF|z+O(1)$, as $z\nearrow\infty$ (see also \cite[Corollary 1.6.1]{LPPS} for a related result).   Ferrulli  and Lagac\'e show (see the appendix) that the following two-term asymptotic for $R_\gamma(z)$, $\gamma>0$ is a consequence of \eqref{2asym0}.   
\begin{equation}\label{FL}R_\gamma(z,\cD_N)=
C_{2,\gamma}|\cF|z^{\gamma+1}+\frac{\pi}{8}\left(\frac{1}{\alpha}+\frac{1}{\beta}\right)z^\gamma+o(z^\gamma),
\end{equation}
where $\alpha,\beta\in(0,\frac{\pi}{2})$ are the interior angles between $\cF$ and $\cB$. The above asymptotic holds when one or both  angles take the value $\frac{\pi}{2}$ provided that  $\Omega$ satisfies the \textit{local John condition} (we refer to the appendix for the definition).   We observe that the coefficient of $z^\gamma$ in the second term of inequality \eqref{1.4} depends on the same quantities appearing in the coefficient of $z^\gamma$ in the two--term asymptotic expansion~\eqref{2asym0}. In particular, for domains satisfying the John condition, the second term of \eqref{recintro} is also asymptotically sharp.  

In higher dimensions, we  obtain a general formula for a two--term lower bound on $R_\gamma(z,\cD_N)$, $\gamma\ge1$, see Theorem~\ref{main}. Here, we only mention a corollary of  Theorem~\ref{main}.  
\begin{thm}\label{maincor} Assume that $\Omega$ with $\partial\Omega=\cF\cup\cB$ satisfied the John condition, i.e. is  a subset of $\cF\times(-\infty,0)$. Then 
\begin{equation} 
R_1^\Omega(z)\ge C_{n,1}|\cF|z^{n}+\frac{(n-1)\omega_{n-1}}{(2\pi)^{n-1}}\frac{|\cF|}{(2h_\Omega)^n}\left(\Gamma(n)-\Gamma(n,2h_\Omega z)\right),
\end{equation}
where $h_\Omega$ is the depth of~$\Omega$.
\end{thm}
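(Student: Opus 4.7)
The plan is to apply the average variational principle to the Dirichlet--to--Neumann operator $\cD_N$ acting on $L^2(\cF)$, tested against the family of plane waves $\psi_\xi(x) := e^{i\xi\cdot x}$, $\xi\in\R^{n-1}$, restricted to $\cF$. Plancherel's identity furnishes the resolution of identity with constant $(2\pi)^{n-1}$, so the principle yields
\[
(2\pi)^{n-1}\,R_1^\Omega(z,\cD_N) \;\ge\; \int_{\R^{n-1}}\bigl(z\,|\cF| - \langle\cD_N\psi_\xi,\psi_\xi\rangle\bigr)_+\, d\xi,
\]
since $\|\psi_\xi\|_{L^2(\cF)}^2 = |\cF|$.

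To control the integrand, I use the Dirichlet principle: the mixed Dirichlet--Neumann harmonic extension $\tilde\psi_\xi$ minimizes $\int_\Omega|\nabla v|^2$ over all $v\in H^1(\Omega)$ with $v|_\cF = \psi_\xi$, with no constraint imposed on $\cB$ (the Neumann condition on $\cB$ is natural). Testing with $v(x,x_n) := e^{i\xi\cdot x}\,e^{|\xi| x_n}$, which agrees with $\psi_\xi$ on $\cF$ and satisfies $|\nabla v|^2 = 2|\xi|^2 e^{2|\xi|x_n}$, and using the John hypothesis $\Omega\subset \cF\times(-h_\Omega,0)$ to integrate the $x_n$-factor over $(-h_\Omega,0)$, I obtain
\[
\langle\cD_N\psi_\xi,\psi_\xi\rangle = \int_\Omega|\nabla\tilde\psi_\xi|^2 \;\le\; 2|\xi|^2\int_\Omega e^{2|\xi|x_n}\,dx \;\le\; |\cF|\,|\xi|\bigl(1 - e^{-2|\xi|h_\Omega}\bigr).
\]

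Substituting back and using the pointwise inequality
\[
\bigl(z-|\xi|+|\xi|e^{-2|\xi|h_\Omega}\bigr)_+ \;\ge\; (z-|\xi|)_+ + |\xi|\,e^{-2|\xi|h_\Omega}\,\mathbf{1}_{\{|\xi|\le z\}},
\]
(immediate from case analysis on whether $|\xi|\le z$) splits the integral in two. The first piece $\int_{\R^{n-1}}(z-|\xi|)_+\,d\xi = \omega_{n-1}z^n/n$, combined with the identity $\omega_{n-1}/(n(2\pi)^{n-1}) = C_{n,1}$, reproduces the Weyl term $C_{n,1}|\cF|z^n$ after dividing by $(2\pi)^{n-1}$. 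For the second piece, passing to polar coordinates and substituting $s=2rh_\Omega$ turns $\int_{|\xi|\le z}|\xi|e^{-2|\xi|h_\Omega}\,d\xi$ into $(2h_\Omega)^{-n}(n-1)\omega_{n-1}\bigl(\Gamma(n)-\Gamma(n,2h_\Omega z)\bigr)$, matching the second term in the claim.

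The principal obstacle is invoking the average variational principle in the correct form for the sloshing DtN map, tracking the constant $(2\pi)^{n-1}$ from Plancherel, and justifying the Dirichlet-principle comparison with an explicit competitor that need not satisfy the Neumann condition on $\cB$. Once those are in place the decomposition of the positive part cleanly separates the semiclassical leading contribution from the geometric correction, and the remaining integrals reduce to standard special-function identities.
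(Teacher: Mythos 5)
Your proof is correct, and all the computations check out: the Plancherel constant $(2\pi)^{n-1}$, the Dirichlet-energy bound $\int_\Omega|\nabla v|^2 \le |\cF||\xi|(1-e^{-2|\xi|h_\Omega})$, the pointwise decomposition of the positive part, and the reduction of the two integrals to $\omega_{n-1}z^n/n$ and to $(2h_\Omega)^{-n}(n-1)\omega_{n-1}(\Gamma(n)-\Gamma(n,2h_\Omega z))$ are all as claimed. The overall scaffold is the same as the paper's — the averaged variational principle tested against the harmonic plane waves $e^{i\xi\cdot x'}e^{|\xi|x_n}$ — but the way you pass from the general domain $\Omega$ to the cylinder is genuinely different. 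The paper first invokes eigenvalue monotonicity of the sloshing problem (Lemma~\ref{monotoni}, a nontrivial result cited from \cite{Petal10}) to replace $\Omega$ by $\cF\times(-h_\Omega,0)$, then applies the AVP on the cylinder and converts the volume integral to a boundary integral over $\cB$ via Green's formula, finally estimating that boundary term in Corollary~\ref{maincor2}. You instead apply the AVP directly on $\Omega$ and exploit domain monotonicity of the (positive) Dirichlet-energy integrand $e^{2|\xi|x_n}$ to pass to the cylinder, bypassing the eigenvalue monotonicity lemma entirely; this makes the argument more self-contained at no extra cost. Two small stylistic remarks: the detour through $\langle\CD_N\psi_\xi,\psi_\xi\rangle$ and the Dirichlet principle is not strictly needed — inequality~\eqref{mminimax} already holds for any $f\in H^1(\Omega)$ with $f|_\cF=\psi_\xi$, so you can test directly with $v$ and read off $-\int_\Omega|\nabla v|^2$ — and the insertion of $(\cdot)_+$ under the integral corresponds to choosing the optimal $M_0$ in Lemma~\ref{lemav}, which is permitted since $M_0$ is arbitrary there.
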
 
We recall the definition of the incomplete $\Gamma$-function $\Gamma(n,x)$:
$$
\Gamma(n,x):=(n-1)!e^{-x}\sum_{k=0}^{n-1}\frac{x^k}{k!}.
$$
In particular, notice 
$$
\Gamma(n)-\Gamma(n,x)>0,\qquad \forall x>0, \quad \forall n\in\N.
$$
One can apply the Riesz iteration in the inequalities above and get a lower bound on $R_\gamma^\Omega(z,\cD_N)$.\\
The leading term in \eqref{maincor} is asymptotically sharp. To the best of our knowledge  no two-term asymptotic expansion is known in higher dimensions in order to compare our results with. 

The  proofs of the results above are based on using  the averaged variational principal introduced in~\cite{EHIS} and monotonicity results for eigenvalues of \eqref{slosh}  studied in~\cite{Petal10}. \\

We now give an asymptotically sharp bounds for the sum of first $k$th eigenvalues of the sloshing problem. 
Kr\"oger~\cite{Kr92} obtained  a sharp upper bound for the sum of eigenvalues of the Laplacian with Neumann boundary condition. His result was recently sharpen by Harrell and Stubbe in~\cite{HS16}. Adapting the  argument in~\cite{HS16} to the sloshing eigenvalue problem, we  obtain a counterpart of the Kr\"oger--Harrell--Stubbe inequality for the sum of eigenvalues of the sloshing problem. 
 \begin{thm}\label{kroger}
Under the assumption of Theorem~\ref{maincor}, we have 
\begin{equation}\label{kroger1}
\frac{1}{k}\sum_{j=1}^k\nu_j\le \frac{n-1}{n}\left(W_{n,k}-\frac{1}{W_{n,k}}(\nu_{k+1}-W_{n,k})^2\right),
\end{equation}
where 
\[
W_{n,k}:=2\pi\omega_{n-1}^{-\frac{1}{n-1}}\left(\frac{k}{|\cF|}\right)^{\frac{1}{n-1}}.
\]
Inequality \eqref{kroger1} in particular gives a two-sided asymptotically sharp  bound for an individual eigenvalue 
\begin{equation}
W_{n,k}(1-\sqrt{1-S_k})\le \nu_{k+1}\le W_{n,k}(1+\sqrt{1-S_k}),
\end{equation}
where 
$$
S_k:=\frac{n}{n-1}\frac{\sum_{j=1}^k\nu_j}{kW_{n,k}}.
$$
\end{thm}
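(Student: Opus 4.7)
The proof follows the Kröger--Harrell--Stubbe scheme adapted to Riesz means of order one. The plan is to combine the sharp Weyl-type lower bound on $R_1^\Omega(z,\cD_N)$ supplied by Theorem~\ref{maincor} with the elementary identity
\[
R_1(\nu_{k+1})=\sum_{j}(\nu_{k+1}-\nu_j)_{+}=k\nu_{k+1}-\sum_{j=1}^{k}\nu_j,
\]
and then to absorb the resulting polynomial bound into a quadratic expression in $\nu_{k+1}-W_{n,k}$. Discarding the (nonnegative) incomplete-$\Gamma$ correction in Theorem~\ref{maincor} gives $R_1^\Omega(z,\cD_N)\ge C_{n,1}|\cF|z^n$ for every $z>0$. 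Evaluating at $z=\nu_{k+1}$ and using the identity above---which remains valid regardless of multiplicities, since any eigenvalue coinciding with $\nu_{k+1}$ contributes zero on both sides---yields
\[
\frac{1}{k}\sum_{j=1}^{k}\nu_j \le \nu_{k+1} - \frac{C_{n,1}|\cF|}{k}\,\nu_{k+1}^{n}.
\]
A short calculation using $\omega_{n-1}\Gamma(\tfrac{n+1}{2})=\pi^{(n-1)/2}$ shows that $W_{n,k}$ is defined precisely so that $C_{n,1}|\cF|/k = 1/(nW_{n,k}^{n-1})$, so the right-hand side becomes $\nu_{k+1}-\nu_{k+1}^{n}/(nW_{n,k}^{n-1})$.

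The main analytic step is then to pass from $t-t^n/(nW^{n-1})$ to the quadratic bound in~\eqref{kroger1}. This reduces to the pointwise inequality
\[
t-\frac{t^{n}}{nW^{n-1}} \le \frac{n-1}{n}\left(W-\frac{(t-W)^{2}}{W}\right),\qquad t,W>0,\ n\ge 2.
\]
After the substitution $s=t/W$ and clearing denominators, it becomes $s\bigl[(n-1)s - s^{n-1} - (n-2)\bigr]\le 0$. The bracketed function $\psi(s)=(n-1)s-s^{n-1}-(n-2)$ satisfies $\psi(1)=0$ and $\psi'(s)=(n-1)(1-s^{n-2})$, whose only positive zero is $s=1$, at which $\psi$ attains its global maximum; hence $\psi\le 0$ on $(0,\infty)$. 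Applying this with $t=\nu_{k+1}$ delivers~\eqref{kroger1}.

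The two-sided bound on $\nu_{k+1}$ is immediate: rearranging~\eqref{kroger1} presents itself as the quadratic inequality $(\nu_{k+1}-W_{n,k})^{2}\le W_{n,k}^{2}(1-S_k)$, whose roots are $W_{n,k}(1\pm\sqrt{1-S_k})$, and the nonnegativity of $1-S_k$ is guaranteed by this very inequality. I expect the main conceptual obstacle to lie not in the algebraic manipulations, which are all elementary, but in certifying that the Harrell--Stubbe averaging---originally designed for Dirichlet/Neumann Laplace eigenvalues---transfers cleanly to the sloshing setting; happily, Theorem~\ref{maincor} provides exactly the sharp semiclassical Weyl input that the scheme demands, so the adaptation is essentially automatic once one has chosen to evaluate $R_1$ at the specific point $z=\nu_{k+1}$.
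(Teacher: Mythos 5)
Your proof is correct and follows essentially the same route as the paper: you evaluate the semiclassical lower bound at $z=\nu_{k+1}$, use the identity $R_1(\nu_{k+1})=k\nu_{k+1}-\sum_{j\le k}\nu_j$, and then apply precisely the same Young-type pointwise inequality (the paper, after the substitution $R=W_{n,k}x$ and $x=\nu_{k+1}/W_{n,k}$, arrives at $nx-(n-1)-x^n\le -(n-1)(x-1)^2$, which expands to your $s\bigl[(n-1)s-s^{n-1}-(n-2)\bigr]\le 0$). The only cosmetic differences are that the paper re-derives the intermediate inequality directly from the averaged variational principle (Lemma~\ref{lemav}) with a free radius $R$ so as to first obtain the more general Theorem~\ref{generalthm} with the boundary correction $c_{\cB}(\nu_{k+1})$, whereas you shortcut via Theorem~\ref{maincor} by dropping its nonnegative incomplete-$\Gamma$ term; and the paper cites \cite[Appendix A]{HS16} for the pointwise inequality while you prove it by elementary calculus.
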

\subsection{Steklov--Dirichlet eigenvalue problem}\label{intro2} In this section, we state our result on asymptoticly sharp upper bounds on $R_\gamma^\Omega(z,\cD_D)$, $\gamma\ge1$.
 The same asymptotic \eqref{rieszasym} remains true for the mixed Steklov--Dirichlet problem~\eqref{dirstek}.  
 \begin{equation}
 R_\gamma(z)\sim C_{n,\gamma}|\cF|z^{n+\gamma-1},\qquad z\nearrow\infty.
 \end{equation}
 We obtain an asymptotically sharp upper bound for $R_\gamma^\Omega(z,\cD_D)$: 
\begin{thm}\label{adirstek}Let $\Omega$ be a bounded domain and subset of an infinite cylinder  $\cF\times[-\infty,0]$, where $\partial\Omega=\cF\cup\cB$. Here $\cF$ is the free part of the boundary. Then for every $\gamma\ge1$ and $z>0$ we have
\begin{equation}\label{1.6} R^\Omega_\gamma(z,\cD_D)=\sum_{j}(z-\eta_j)^\gamma_+\le C_{n,\gamma}|\cF|z^{n+\gamma-1}.
\end{equation}
 \end{thm}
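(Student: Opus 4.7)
The plan is to bound the Steklov--Dirichlet eigenvalues $\eta_k(\Omega)$ from below by the eigenvalues of a fractional Laplacian on the free surface $\cF\subset\R^{n-1}$, and then invoke a known semiclassical Berezin--Li--Yau inequality for that operator. The key observation is that the Dirichlet-to-Neumann map on an infinite cylinder with a Dirichlet lateral condition is precisely the square root of the Dirichlet Laplacian on the cross-section.

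First, I would use domain monotonicity to pass from $\Omega$ to the infinite cylinder $C:=\cF\times(-\infty,0)$. Any Steklov--Dirichlet trial function on $\Omega$ (an $H^1(\Omega)$ function vanishing on $\cB$) extends by zero to an admissible trial function on $C$, belonging to $H^1(C)$ and vanishing on the lateral boundary of $C$, with Rayleigh quotient $\int|\nabla u|^2/\int_\cF u^2$ unchanged. The min--max principle therefore yields $\eta_k(\Omega)\ge \eta_k(C)$ for every $k$. On $C$, separation of variables via the ansatz $u(x',x_n)=\phi_k(x')e^{\sqrt{\lambda_k}\,x_n}$, where $(\phi_k,\lambda_k)$ are Dirichlet eigenpairs of $-\Delta$ on $\cF$, identifies the Dirichlet-to-Neumann map on $C$ with the spectral (Navier) fractional Laplacian $(-\Delta_\cF)^{1/2}$, so $\eta_k(C)=\sqrt{\lambda_k}$.

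Next I would compare this spectral fractional Laplacian with the \emph{restricted} (Dirichlet) fractional Laplacian $(-\Delta)^{1/2}$ on $\cF\subset\R^{n-1}$, defined via its singular integral representation with zero Dirichlet condition on $\R^{n-1}\setminus\cF$ and eigenvalues $\mu_k$. A classical operator-theoretic domination (Musina--Nazarov; also Chen--Song) gives $\mu_k\le\sqrt{\lambda_k}$, hence $(z-\sqrt{\lambda_k})_+\le (z-\mu_k)_+$. Finally, for $\gamma\ge 1$ the Riesz mean of the Dirichlet fractional Laplacian on a bounded domain satisfies the Berezin--Li--Yau--type bound (Yildirim Yolcu--Yolcu, Geisinger)
\[
\sum_k (z-\mu_k)_+^\gamma \le C_{n,\gamma}|\cF|\,z^{n+\gamma-1},
\]
the classical constant, read off from the symbol $|\xi|$ of $(-\Delta)^{1/2}$ via the phase-space integral $(2\pi)^{-(n-1)}\int_{\R^{n-1}}(z-|\xi|)^\gamma_+\,\mathrm d\xi$, matching exactly the $C_{n,\gamma}$ in \eqref{rieszasym}. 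Concatenating the three inequalities yields \eqref{1.6}.

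The main technical obstacle is Step~3: verifying that the Navier--Dirichlet comparison $\mu_k\le\sqrt{\lambda_k}$ applies with the generality needed (bounded $\cF$ with Lipschitz boundary), and locating a statement of the Berezin--Li--Yau bound for $(-\Delta)^{1/2}$ valid with the sharp constant for all $\gamma\ge 1$ rather than just $\gamma=1$ (the latter extends to higher $\gamma$ by Riesz iteration \eqref{iteration}, but one must check that the iterated constant remains $C_{n,\gamma}$). The domain monotonicity in Step~1 is routine once one checks that the zero extension is in $H^1(C)$ at corner points where $\cB$ meets $\cF$.
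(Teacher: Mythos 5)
Your proposal follows essentially the same route as the paper: reduce to a cylinder over $\cF$, identify the Steklov--Dirichlet spectrum there with the spectral (Navier) fractional Laplacian $(-\Delta_\cF)^{1/2}$, invoke the Musina--Nazarov domination $\lambda_j((-\Delta)^{1/2}_D)<\lambda_j((-\Delta)^{1/2}_{Nv})$, and finish with the Berezin--Li--Yau/Laptev bound for the Dirichlet fractional Laplacian with the classical constant $C_{n,\gamma}$. The only cosmetic difference is that you pass to the infinite cylinder $\cF\times(-\infty,0)$ by a direct zero-extension min--max argument, giving $\eta_k(C)=\sqrt{\lambda_k}$ immediately, whereas the paper cites the monotonicity lemma of Ba\~nuelos--Kulczycki--Polterovich--Siudeja to pass to the \emph{bounded} cylinder $\cF\times(-h_\Omega,0)$ and then uses $\sqrt{\lambda_k}\coth(\sqrt{\lambda_k}h_\Omega)\ge\sqrt{\lambda_k}$; both land on the same inequality $\eta_k(\Omega)\ge\sqrt{\lambda_k}$, and the paper also only writes out the $\gamma=1$ case of Laptev's bound, leaving the extension to $\gamma\ge1$ (via the general form of Laptev's result or Riesz iteration) implicit, exactly as you flag.
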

 For the proof of this theorem, we use the relationship between eigenvalues of the fractional Laplacian with different type of boundary conditions studied in~\cite{MN14,MN16,MN16b} together with the result of Laptev~\cite{Lap97} on upper bounds of the Riesz means for the Dirichlet fractional Laplacian. \\
 As pointed out in \cite{LW00} and \cite[Page 8]{LW00b},   an asymptotically sharp upper bound for $R_1(z)$ leads to an asymptotically sharp lower bound for the sum of first $k$th eigenvalues (and vice versa) by  using  the Legendre transform. Thus,  we get the following bound on the sum of firth $k$th Steklov-Dirichlet eigenvalues by applying the Legendre transform  to \eqref{1.6},:
 \begin{cor}
 Under the assumption of Theorem \ref{adirstek}, the following inequality holds.
 \begin{equation}
\frac{1}{k}\sum_{j=1}^k\eta_j\ge  \frac{n-1}{n}W_{n,k}=2\pi\left(\frac{n-1}{n}\right)\omega_{n-1}^{-\frac{1}{n-1}}\left(\frac{k}{|\cF|}\right)^{\frac{1}{n-1}}.
 \end{equation}
 \end{cor}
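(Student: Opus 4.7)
The plan is to deduce this corollary from Theorem~\ref{adirstek} with $\gamma=1$ by a standard Legendre-transform argument relating bounds on Riesz means to bounds on eigenvalue sums, in the spirit of Laptev--Weidl~\cite{LW00}. First I would specialize Theorem~\ref{adirstek} to $\gamma=1$ and simplify the constant: using $\omega_{n-1}=\pi^{(n-1)/2}/\Gamma((n+1)/2)$, a direct computation gives
\[
C_{n,1}=\frac{1}{(4\pi)^{(n-1)/2}}\cdot\frac{1}{n\,\Gamma\!\left(\tfrac{n+1}{2}\right)}=\frac{\omega_{n-1}}{n(2\pi)^{n-1}},
\]
so Theorem~\ref{adirstek} reads $R_1^\Omega(z,\cD_D)=\sum_j(z-\eta_j)_+\le\frac{\omega_{n-1}|\cF|}{n(2\pi)^{n-1}}z^n$ for every $z>0$.

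Next, for any $k\in\N$ and any $z>0$, the trivial pointwise inequality $(z-\eta_j)_+\ge z-\eta_j$ yields
\[
R_1^\Omega(z,\cD_D)\ge\sum_{j=1}^k(z-\eta_j)_+\ge\sum_{j=1}^k(z-\eta_j)=kz-\sum_{j=1}^k\eta_j,
\]
which combined with the upper bound above gives
\[
\sum_{j=1}^k\eta_j\ge kz-\frac{\omega_{n-1}|\cF|}{n(2\pi)^{n-1}}\,z^n,\qquad\forall z>0.
\]
I would then maximize the right-hand side over $z>0$. Setting the derivative in $z$ to zero, the optimizer is
\[
z_\ast=\left(\frac{k(2\pi)^{n-1}}{\omega_{n-1}|\cF|}\right)^{1/(n-1)}=2\pi\,\omega_{n-1}^{-1/(n-1)}\left(\frac{k}{|\cF|}\right)^{1/(n-1)}=W_{n,k},
\]
and a one-line calculation shows that the maximal value of $kz-\tfrac{1}{n}kz$ is attained and equals $k\tfrac{n-1}{n}W_{n,k}$.

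Dividing by $k$ gives the claimed inequality $\frac{1}{k}\sum_{j=1}^k\eta_j\ge\frac{n-1}{n}W_{n,k}$. There is no real obstacle here; the only point requiring care is the constant-matching between $C_{n,1}$ and the factor $\omega_{n-1}/(n(2\pi)^{n-1})$ that produces $W_{n,k}$ as the Legendre dual exponent. As a sanity check, the asymptotic sharpness of Theorem~\ref{adirstek} translates, via the same Legendre transform (now applied as an equality to leading order), into the matching lower bound $\tfrac{1}{k}\sum_{j=1}^k\eta_j\sim\tfrac{n-1}{n}W_{n,k}$ as $k\to\infty$, confirming that the constant in the corollary is optimal.
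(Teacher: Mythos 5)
Your proposal is correct and follows exactly the route the paper indicates: specialize Theorem~\ref{adirstek} to $\gamma=1$, then apply the Legendre-transform idea from Laptev--Weidl (pass from the Riesz-mean bound $\sum_j(z-\eta_j)_+\le C_{n,1}|\cF|z^n$ to $\sum_{j\le k}\eta_j\ge kz-C_{n,1}|\cF|z^n$ and optimize in $z$, which yields $z_\ast=W_{n,k}$). The paper merely cites the Legendre transform and states the result; your write-up supplies precisely the computation that the paper omits, and the constant-matching $C_{n,1}=\omega_{n-1}/(n(2\pi)^{n-1})$ and the maximizer $z_\ast=W_{n,k}$ are both verified correctly.
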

 
  As in \eqref{FL},  Ferrulli and Lagac\'e also obtain the following two-term asymptotic  for  $R_\gamma(z,\cD_N)$. We refer to the appendix for more details. \begin{equation}\label{FL2}R_\gamma(z,\cD_D)=
C_{2,\gamma}|\cF|z^{\gamma+1}-\frac{\pi}{8}\left(\frac{1}{\alpha}+\frac{1}{\beta}\right)z^\gamma+o(z^\gamma),
\end{equation} where $\alpha,\beta\in(0,\frac{\pi}{2})$ are the interior angles between $\cF$ and $\cB$. This asymptotic holds when $\alpha$ or $\beta$ is equal to $\frac{\pi}{2}$ provided that $\Omega$ satisfies the local John condition. We show that when  $\Omega$ satisfies the John condition, we can get two-term asymptotically sharp upper  bound for  $R^\Omega_1(z,\cD_D)$.  
\begin{thm}\label{recdir}
Let $\Omega\subset\R^2$ satisfies the John condition. Then the Riesz mean of eigenvalues $\eta_j$ of problem \eqref{dirstek} satisfies
 \begin{equation}\label{eqrecdir}R^\Omega_1(z,\cD_D)=\sum_{j}(z-\eta_j)_+\le\frac{|\cF|}{2\pi}z^2-\frac{1}{2}z+\frac{\pi}{2|\cF|}.\end{equation}
\end{thm}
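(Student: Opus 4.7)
\textbf{Proof plan for Theorem \ref{recdir}.}

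The key idea is to reduce the problem to the eigenvalues on a rectangular half-strip, for which separation of variables gives an explicit spectrum. Since $\Omega$ satisfies the John condition, $\Omega \subset \cF \times (-\infty, 0)$. For clarity, assume $\cF$ is a single interval of length $L := |\cF|$ (the case of disconnected $\cF$ requires only notational changes). Choose $T > 0$ large enough that $\Omega \subset \Omega_T := \cF \times (-T, 0)$. The plan is:

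\emph{Step 1: Monotonicity via extension by zero.} Any $u \in H^1(\Omega)$ with $u|_\cB = 0$ extends by zero to a function $\tilde u \in H^1(\Omega_T)$ that vanishes on $\cB_T := \partial \Omega_T \setminus \cF$. Since the extension preserves both $\int |\nabla u|^2$ and $\int_\cF u^2$, the variational (min--max) characterization of Steklov--Dirichlet eigenvalues yields
\[
\eta_k(\Omega) \ge \eta_k(\Omega_T).
\]

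\emph{Step 2: Explicit spectrum on $\Omega_T$.} Separation of variables on $\Omega_T = (0,L) \times (-T,0)$ with Dirichlet conditions on the sides and bottom gives eigenfunctions $\sin(\pi k x/L)\sinh(\pi k(y+T)/L)$, so
\[
\eta_k(\Omega_T) = \frac{\pi k}{L}\coth\!\left(\frac{\pi k T}{L}\right) \ge \frac{\pi k}{L}.
\]
Combining with Step~1, $\eta_k(\Omega) \ge \pi k / L$ for every $k \ge 1$.

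\emph{Step 3: Elementary summation.} From Step~2,
\[
R_1^\Omega(z,\cD_D) = \sum_k (z - \eta_k)_+ \le \sum_{k \ge 1}\!\left(z - \frac{\pi k}{L}\right)_{\!+}.
\]
Setting $y := Lz/\pi$ and writing $y = K + \theta$ with $K \in \N \cup \{0\}$ and $\theta \in [0,1)$, the right-hand sum evaluates to $\frac{\pi}{2L}(K^2 - K + 2K\theta)$, whereas the claimed bound equals $\frac{\pi}{2L}(y^2 - y + 1)$. Their difference is
\[
\frac{\pi}{2L}\bigl(\theta^2 - \theta + 1\bigr) \ge \frac{3\pi}{8L} > 0,
\]
which completes the proof.

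\emph{Main obstacle.} The only nontrivial point is Step~1: one must verify that the zero-extension really lies in the admissible Sobolev class for $\Omega_T$ (so that the min--max comparison is valid), which uses precisely the John-condition geometry ensuring $\partial\Omega \setminus \cF \subset \overline{\Omega_T \setminus \cF}$. Everything else is either an explicit computation on the rectangle or the algebraic identity in Step~3, which also explains why the resulting bound matches the sharp two--term asymptotic \eqref{FL2} in the rectangular case $\alpha = \beta = \pi/2$, up to the small remainder $\pi/(2|\cF|)$ needed to absorb the oscillatory behaviour of the eigenvalue sum at small $z$.
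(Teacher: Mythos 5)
Your proposal is correct and follows essentially the same route as the paper: reduce to a rectangle via domain monotonicity, use the explicit eigenvalues $\frac{\pi k}{L}\coth(\frac{\pi k T}{L})\ge\frac{\pi k}{L}$, and finish with an elementary estimate on $\sum_{k\ge1}(z-\pi k/L)_+$. The only cosmetic differences are that you reprove the monotonicity directly via extension by zero (the paper cites Lemma~\ref{monotonidir} from \cite{Petal10}) and you carry out the summation explicitly rather than invoking Lemma~\ref{simple}.
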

One can apply the Riesz iteration to find bounds on $R_\gamma(z,\cD_D)$.
\begin{open}It is an intriguing question if we can get a two--term upper bound with a negative second term depending only on $\alpha$ and $\beta$.  One can ask if there exist a positive constant $C(\alpha,\beta)$  such that
 $$
 \sum_{j}(z-\eta_j)^\gamma_+\le C_{n,\gamma}|\cF|z^{n+\gamma-1}-C(\alpha,\beta)z^{n+\gamma-2}.
 $$
 \end{open}

The paper is organised as follows. In section~\ref{2}, we prove the main results  on bounds on the Riesz means of eigenvalues of problem \eqref{slosh}, and on Kr\"oger-Harrell-Stubbe's type inequality for eigenvalues of the sloshing problem. We also consider cases in which we can get more explicit lower bounds. In section~\ref{4}, we study the upper bounds on the Riesz means of eigenvalues of the mixed Steklov-Dirichlet problem \eqref{dirstek}. 
\subsection*{Acknowledgements} The authors would like to thank M. van den Berg and I. Polterovich for interesting discussion.  The main part of this work was completed when the first named author was an EPDI postdoctoral fellow in the Mittag-Leffler Institute. She is grateful to this institute for the support and for providing an excellent working condition. Funding for this research was partially provided by the grant of the Russian Federation Government to support research under the supervision of a leading scientist at the Siberian Federal University, 14.Y26.31.0006.

\section{Slosing eigenvalue problem} \label{2}
In this section, we prove the results of Section~\ref{intro1} of the introduction. 
We first recall the variational characterisation of the eigenvalues of the mixed Steklov-Neumann problem. Let 
$\{\varphi_j\}_{j=1}^\infty$ be a sequence of eigenfunctions associate with $\{\nu_j\}_{j=1}^\infty$. The $k$-th eigenvalue $\nu_k$ of  problem \eqref{slosh} can be characterised by 
\be{varch}
\nu_k=\inf_{0\ne f\in H_k}R(f),
\ee
where $H_k=\left\{g\in H^1(\Omega): \int_\cF g\varphi_j ds=0, j=1,\ldots,k-1\right\}$, and
\[
R(f):=\frac{\int_\Omega|\nabla f|^2dx}{\int_{\cF}|f|^2 ds}.
\] 
Let $\cH(\Omega)$ denotes the space of harmonic functions on $\Omega$. In \eqref{varch} one can replace $H_k$ by $\cH_k:=\{f\in \cH(\Omega): \int_\cF g\varphi_j ds=0, j=1,\ldots,k-1\}$.
We recall the so-called averaged variational principle introduced in~\cite{EHIS}. 
Let $f\in H^1(\Omega)$ and $z\in(\nu_{k-1},\nu_k]$. We choose   $\{\varphi_j\}$ so that their  restriction to $\cF$ forms an orthonormal basis for $L^2(\cF)$.  Thus, by \eqref{varch} we have
\[z\le R\left(f-\sum_{j=1}^{k-1}\la\varphi_j,f\ra_\cF\varphi_j\right)=\frac{\int_\Omega|\nabla f|^2\,dx-\sum_{j=1}^{k-1}\nu_j|\la\varphi_j,f\ra|^2_\cF}{\int_\cF|f|^2\,ds-\sum_{j=1}^{k-1}|\la\varphi_j,f\ra|^2_\cF},\]
where $\la f, g\ra_\cF:=\int_\cF f\bar g\,ds$.
Therefore,
\be{mminimax}\sum_j(z-\nu_j)_+\left|\la \varphi_j,f\ra\right|^2_\cF\ge z\int_\cF |f|^2ds-\int_\Omega|\nabla f|^2dx.\ee
 If moreover $f\in \cH(\Omega)$, then applying the Green formula we get
\be{mvar}\sum_j(z-\nu_j)_+|\la \varphi_j,f\ra|^2_\cF\ge z\int_\cF |f|^2ds-\re\int_{\partial\Omega} \frac{\partial f}{\partial\bn} \bar f\,ds.\ee
We summarise the discussion above in the following lemma which is called the averaged variational principle. This  is an special case of a more general statement in~\cite[Lemma 1.5]{EHIS}.
\begin{lem}[averaged variational principle]\label{lemav} Let $f_\xi\in\cH(\Omega)$ be a family of harmonic functions where $\xi$ varies over a measure space $(M, \cM,\mu)$, with $\sigma$-algebra $\cM$. Let $M_0$ be a measurable subset of $M$. Then for any $z\in\R_+$ we have
\be{mvarg}\sum_j(z-\nu_j)_+\int_{M}|\la \varphi_j,f_\xi\ra|^2_\cF d\mu\ge z\int_{M_0}\int_\cF |f_\xi|^2ds\,d\mu-\int_{M_0}\re\int_{\partial\Omega} \frac{\partial f_\xi}{\partial\bn} \bar f_\xi\,ds\,d\mu.\ee
\end{lem}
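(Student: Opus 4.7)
The plan is to recognise Lemma~\ref{lemav} as simply the $\xi$-integrated form of the pointwise inequality \eqref{mvar}, which has already been derived above for an arbitrary single harmonic function. That derivation used only the variational characterisation \eqref{varch}, Green's identity, and the orthonormality of $\{\varphi_j|_\cF\}$, so it will apply verbatim with $f$ replaced by $f_\xi$ for each fixed $\xi$.

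Accordingly, I would proceed in two short steps. First, for every $\xi\in M_0$ I apply \eqref{mvar} to the harmonic function $f_\xi\in\cH(\Omega)$, obtaining
\[
\sum_j (z-\nu_j)_+\,|\la\varphi_j,f_\xi\ra|^2_\cF \;\ge\; z\int_\cF |f_\xi|^2\,ds \;-\; \re\int_{\partial\Omega}\frac{\partial f_\xi}{\partial\bn}\,\bar f_\xi\,ds.
\]
Next, I integrate both sides against $d\mu(\xi)$ over $M_0$ and exchange the sum and the integral on the left by Tonelli's theorem, which is legitimate because every summand is non-negative. This will yield
\[
\sum_j (z-\nu_j)_+\!\int_{M_0}|\la\varphi_j,f_\xi\ra|^2_\cF\,d\mu \;\ge\; z\int_{M_0}\!\!\int_\cF |f_\xi|^2\,ds\,d\mu \;-\; \int_{M_0}\re\!\int_{\partial\Omega}\frac{\partial f_\xi}{\partial\bn}\,\bar f_\xi\,ds\,d\mu.
\]

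Finally, because $(z-\nu_j)_+\ge 0$ and $|\la\varphi_j,f_\xi\ra|^2_\cF\ge 0$ pointwise in $\xi$, enlarging the domain of integration on the left from $M_0$ to $M$ only increases the left-hand side, producing \eqref{mvarg}. There is no substantive obstacle to overcome: the only technical point is the measurability of $\xi\mapsto|\la\varphi_j,f_\xi\ra|^2_\cF$ and $\xi\mapsto\int_\cF|f_\xi|^2\,ds$, which follows from the implicit standing assumption that $\xi\mapsto f_\xi$ is measurable into $H^1(\Omega)$ together with continuity of the trace map and of the $L^2(\cF)$ pairing. The real content of the statement is the asymmetry between $M$ on the left and $M_0$ on the right: the trial functions are tested against eigenfunctions over the full parameter space, while the Rayleigh-type numerator and boundary term only need to be estimated on a conveniently chosen subset $M_0$. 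This is precisely the flexibility that will be exploited in the proofs of Theorems~\ref{triangle},~\ref{maincor} and~\ref{kroger}.
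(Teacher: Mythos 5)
Your proof is correct and is essentially the paper's own argument: the authors derive \eqref{mvar} for a single harmonic $f$ from the variational characterisation \eqref{varch} and Green's formula, and then present Lemma~\ref{lemav} as nothing more than the integrated form of that pointwise inequality (``We summarise the discussion above in the following lemma''), deferring the general statement to \cite[Lemma 1.5]{EHIS}. You have simply made explicit the three small steps the paper leaves implicit --- apply \eqref{mvar} at each fixed $\xi$, integrate over $M_0$ using Tonelli, and enlarge $M_0$ to $M$ on the left by positivity --- so there is no substantive difference.
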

Another key lemma we need is the monotonicity results for the mixed Steklov-Neumann eigenvalues:
\begin{lem}{\cite[Proposition 3.2.1]{Petal10}}\label{monotoni}
Let $\Omega$ and $\tilde\Omega$ be subdomains of $\R^n$ whose boundaries $\partial\Omega=\cF\cup\cB$ and $\partial\tilde\Omega=\tilde\cF\cup\tilde\cB$ are as described in \eqref{hypothesis}.  Let $\Omega$ be a proper subset of $\tilde\Omega$ and $\tilde\cF=\cF$. 
Then the following inequality holds.
\[\nu_{k}(\Omega)<\nu_{k}(\tilde\Omega),\qquad\forall k\ge2 .\]
In particular,
$$
R^\Omega_\gamma(z)=\sum_j(z-\nu_j(\Omega))_+\ge\sum_j(z-\nu_j(\tilde\Omega))_+= R^{\tilde\Omega}_\gamma(z).
$$
\end{lem}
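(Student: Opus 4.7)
The plan is to establish the strict monotonicity by combining the min-max variational characterization of the Steklov--Neumann eigenvalues with a unique continuation argument for harmonic functions, and then to deduce the Riesz means inequality as an immediate consequence. Since $\tilde\cF = \cF$, both eigenvalue problems share the common denominator $\int_\cF f^2\,ds$ in the Rayleigh quotient $R(f) = \int |\nabla f|^2\,dx / \int_\cF f^2\,ds$, so the comparison reduces to comparing Dirichlet energies of $H^1$-extensions that share boundary data on $\cF$.

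First I would take the first $k$ eigenfunctions $\tilde\varphi_1,\ldots,\tilde\varphi_k$ of the sloshing problem on $\tilde\Omega$, orthonormal on $\cF$, and set $V_k := \mathrm{span}(\tilde\varphi_1|_\Omega, \ldots, \tilde\varphi_k|_\Omega) \subset H^1(\Omega)$. To see that $\dim V_k = k$, I would argue that any nontrivial linear combination $\sum c_i \tilde\varphi_i$ vanishing identically on the open set $\Omega$ must, by unique continuation for harmonic functions on the connected domain $\tilde\Omega$, vanish on all of $\tilde\Omega$, contradicting linear independence of the $\tilde\varphi_i$. The same reasoning shows the trace map $V_k \to L^2(\cF)$ is injective, so $\|\cdot\|_{L^2(\cF)}$ defines a norm on the finite-dimensional space $V_k$.

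Next, for $f = \sum c_i \tilde\varphi_i|_\Omega \in V_k$ with $\tilde f := \sum c_i \tilde\varphi_i$, I would estimate
\[
\int_\Omega |\nabla f|^2\,dx \;\le\; \int_{\tilde\Omega} |\nabla \tilde f|^2\,dx \;=\; \sum_i c_i^2 \,\nu_i(\tilde\Omega) \;\le\; \nu_k(\tilde\Omega) \sum_i c_i^2 \;=\; \nu_k(\tilde\Omega) \int_\cF f^2\,ds,
\]
where the middle equality uses Green's identity together with the sloshing boundary conditions and $L^2(\cF)$-orthonormality of the $\tilde\varphi_i$. The min-max principle \eqref{varch} applied on $V_k$ then yields $\nu_k(\Omega) \le \nu_k(\tilde\Omega)$, and the Riesz means comparison follows at once since $t \mapsto (z-t)_+$ is non-increasing.

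The main obstacle is upgrading this to the strict inequality for $k \ge 2$. The first estimate above is strict unless $\nabla \tilde f \equiv 0$ on the open set $\tilde\Omega \setminus \Omega$, which by unique continuation for the harmonic (hence real-analytic) function $\tilde f$ on the connected domain $\tilde\Omega$ forces $\tilde f$ to be constant, i.e.\ a scalar multiple of $\tilde\varphi_1$. The second estimate is strict unless $c_i = 0$ whenever $\nu_i(\tilde\Omega) < \nu_k(\tilde\Omega)$. For $k \ge 2$ and $\tilde\Omega$ connected (so that $\nu_1(\tilde\Omega) = 0 < \nu_k(\tilde\Omega)$), these two conditions are incompatible, so $R(f) < \nu_k(\tilde\Omega)$ for every nonzero $f \in V_k$. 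Compactness of the unit sphere in the finite-dimensional normed space $(V_k, \|\cdot\|_{L^2(\cF)})$ combined with continuity of $R$ then promotes this pointwise strict inequality to a uniform one, giving $\max_{0\ne f \in V_k} R(f) < \nu_k(\tilde\Omega)$ and concluding the proof.
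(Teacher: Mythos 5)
The paper does not supply its own proof of this lemma; it is imported verbatim from \cite[Proposition~3.2.1]{Petal10}, so the comparison here is against the argument underlying that reference rather than anything internal to this paper. Your proof is correct, and it follows the standard route for Steklov--Neumann domain monotonicity (which is also, essentially, the route of the cited reference): restrict the first $k$ sloshing eigenfunctions of $\tilde\Omega$ to $\Omega$, use their span $V_k$ as a test subspace, and invoke unique continuation for harmonic functions twice --- once to get $\dim V_k = k$ and once to promote the chain of inequalities to strictness. Two minor remarks. First, the injectivity of the trace map $V_k\to L^2(\cF)$ is not really ``the same reasoning'' as the interior unique-continuation step; it follows immediately, and more simply, from the $L^2(\cF)$-orthonormality of $\tilde\varphi_1|_\cF,\ldots,\tilde\varphi_k|_\cF$, since $\sum_i c_i\tilde\varphi_i|_\cF=0$ forces all $c_i=0$. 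Second, \eqref{varch} as written is the $\inf$-over-$H_k$ form of the variational principle, not the min-max form; what you actually use, and what does hold, is that for a $k$-dimensional $V_k\subset H^1(\Omega)$ with injective trace to $L^2(\cF)$ the intersection $V_k\cap H_k$ is nontrivial by a dimension count, giving $\nu_k(\Omega)\le\max_{0\ne f\in V_k}R(f)$. With those clarifications the argument is complete: your case analysis shows $R(f)<\nu_k(\tilde\Omega)$ for every nonzero $f\in V_k$, compactness of the $L^2(\cF)$-unit sphere in the finite-dimensional space $V_k$ upgrades this to $\max_{0\ne f\in V_k}R(f)<\nu_k(\tilde\Omega)$, and the Riesz-mean inequality follows from monotonicity of $t\mapsto(z-t)_+^\gamma$.
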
 
We can now state a general form of the results mentioned in the introduction.
\begin{thm}\label{main} Let $\Omega$ be a bounded domain of $\R^n$ and $\partial \Omega=\cF\cup\cB$ as described in \eqref{hypothesis}. The Riesz means $R_\gamma(z)$, $\gamma\ge1$,  of the eigenvalues of the mixed Steklov-Neumann problem~\eqref{slosh} satisfy the following inequality. 
\be{rieszinq}
R_\gamma(z)\ge C_{n,\gamma}|\cF|z^{n+\gamma-1}+ A_{n,\gamma}(z),
\ee
where 

\[
A_{n,\gamma}(z)=-\frac{(n-1)\gamma(\gamma-1)}{(4\pi)^{\frac{n-1}{2}}\Gamma(\frac{n+1}{2})}\int_0^z(z-\eta)^{\gamma-2}\int_{0}^\eta \int_{\cB} \la\bn(x),e_n\ra e^{2x_nr}r^{n-1}\,ds(x)drd\eta.
\]
Here $ds(x)$ is the volume element on $\cB$ and $\la\cdot,\cdot\ra$ is the inner product in $\R^n$.
\end{thm}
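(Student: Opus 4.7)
The plan is to apply the averaged variational principle (Lemma \ref{lemav}) to the family of exponential harmonic functions
\[
f_\xi(x',x_n) = e^{i\xi\cdot x' + |\xi|x_n},\qquad \xi\in\R^{n-1},
\]
on the measure space $M_0 = M = \{\xi\in\R^{n-1}:|\xi|<z\}$ with Lebesgue measure. These $f_\xi$ are harmonic in $\R^n$, decay into $\Omega$ as $x_n\to-\infty$, and diagonalise the sloshing Dirichlet--to--Neumann map on the half-space $\R^{n-1}\times(-\infty,0]$ with eigenvalue $|\xi|$, which is exactly why they are natural here. On the spectral side, the quantity $\la\varphi_j, f_\xi\ra_\cF = \int_\cF \varphi_j(x')e^{-i\xi\cdot x'}\,ds$ is the Fourier transform on $\R^{n-1}$ of $\varphi_j$ extended by zero off $\cF$, so Plancherel together with the orthonormality of $\{\varphi_j\}$ in $L^2(\cF)$ yields
\[
\int_M |\la\varphi_j,f_\xi\ra_\cF|^2\,d\xi \le \int_{\R^{n-1}}|\la\varphi_j,f_\xi\ra_\cF|^2\,d\xi = (2\pi)^{n-1}.
\]

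Next I compute the geometric side of Lemma \ref{lemav}. Since $|f_\xi|^2\equiv 1$ on $\cF$, one has $z\int_{M_0}\int_\cF|f_\xi|^2\,ds\,d\xi = |\cF|\omega_{n-1}z^n$. A direct gradient computation gives $\re\bigl(\partial_\bn f_\xi\,\bar f_\xi\bigr) = |\xi|\la\bn(x),e_n\ra e^{2|\xi|x_n}$ on $\partial\Omega$, which equals $|\xi|$ on $\cF$ (where $\bn=e_n$, $x_n=0$) and $|\xi|\la\bn,e_n\ra e^{2|\xi|x_n}$ on $\cB$. Passing to polar coordinates $\xi=s\omega$ with $\omega\in S^{n-2}$ (of total surface measure $(n-1)\omega_{n-1}$) and integrating over the sphere, the $\cF$ piece contributes $\frac{(n-1)\omega_{n-1}}{n}|\cF|z^n$ while the $\cB$ piece contributes $(n-1)\omega_{n-1}\int_0^z s^{n-1}\int_\cB\la\bn,e_n\ra e^{2sx_n}\,ds(x)\,ds$. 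Combining with the Plancherel bound from the first step, dividing by $(2\pi)^{n-1}$, and using the identities $\omega_{n-1}/[n(2\pi)^{n-1}] = C_{n,1}$ and $\omega_{n-1}/(2\pi)^{n-1} = 1/[(4\pi)^{(n-1)/2}\Gamma(\frac{n+1}{2})]$, I obtain the case $\gamma=1$ of \eqref{rieszinq}:
\[
R_1(z)\ge C_{n,1}|\cF|z^n - \frac{n-1}{(4\pi)^{(n-1)/2}\Gamma(\frac{n+1}{2})}\int_0^z r^{n-1}\int_\cB\la\bn,e_n\ra e^{2rx_n}\,ds(x)\,dr.
\]

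To reach arbitrary $\gamma\ge 1$, I apply the Riesz iteration identity \eqref{iteration} with starting index $1$, which via $\Gamma(\gamma+1)/\Gamma(\gamma-1)=\gamma(\gamma-1)$ becomes $R_\gamma(z) = \gamma(\gamma-1)\int_0^z(z-\eta)^{\gamma-2}R_1(\eta)\,d\eta$ for $\gamma>1$. Substituting the previous lower bound on $R_1(\eta)$ and evaluating the $\eta$-integral of $\eta^n$ against $(z-\eta)^{\gamma-2}$ via the beta function reproduces $C_{n,\gamma}|\cF|z^{n+\gamma-1}$ in the leading term (exactly as in the verification that this is the correct normalisation of $C_{n,\gamma}$), while the correction term is by inspection equal to $A_{n,\gamma}(z)$ as stated; the case $\gamma=1$ of \eqref{rieszinq} follows trivially from the $\gamma=1$ inequality above since $A_{n,1}\equiv 0$. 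The only genuine technical work is the polar-coordinate integration and the algebra linking $\omega_{n-1}$, $(2\pi)^{n-1}$ and $\Gamma(\frac{n+1}{2})$ through $(4\pi)^{(n-1)/2}$; no substantive analytic obstacle arises because Lemma \ref{lemav} furnishes the spectral--geometric inequality and the family $f_\xi$ is perfectly adapted to the half-space geometry, so that the boundary correction on $\cB$ is encoded directly by $\la\bn,e_n\ra e^{2|\xi|x_n}$.
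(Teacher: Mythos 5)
Your proof follows the same route as the paper: the same family of harmonic test functions $f_\xi = e^{i\xi\cdot x' + |\xi|x_n}$, the same application of the averaged variational principle (Lemma~\ref{lemav}), the same Plancherel bound on the spectral side, the same polar-coordinate computation on the geometric side, and Riesz iteration to pass from $\gamma=1$ to $\gamma\ge 1$. The resulting $\gamma=1$ inequality and the algebra of constants all check out. Two small expository slips are worth correcting: (i) the $\cF$ piece of the boundary integral contributes $-\tfrac{(n-1)\omega_{n-1}}{n}|\cF|z^n$, with a minus sign, which combines with $\omega_{n-1}|\cF|z^n$ from the first term to give $\tfrac{\omega_{n-1}}{n}|\cF|z^n$; you wrote the contribution with a positive sign, though the subsequent display is correct. (ii) Your final remark that ``$A_{n,1}\equiv 0$'' misreads the theorem: the prefactor $\gamma(\gamma-1)$ vanishes at $\gamma=1$, but the integral $\int_0^z(z-\eta)^{\gamma-2}(\cdot)\,d\eta$ diverges there, so the product is a $0\cdot\infty$ form whose limit as $\gamma\to 1^+$ recovers exactly the nonzero correction term given in equation~\eqref{a1gamma} of the paper --- which is, in fact, precisely what your own $\gamma=1$ inequality establishes. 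Your derivation of the $\gamma=1$ case is correct; only the remark about $A_{n,1}$ should be deleted.
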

Note that \begin{equation}\label{a1gamma}A_{n,1}(z):=-\frac{(n-1)\omega_{n-1}}{(2\pi)^{n-1}}\int_{0}^z \int_{\cB} \la\bn(x),e_n\ra e^{2x_nr}r^{n-1}\,ds(x)dr.\end{equation}

It is clear that when $ \la\bn(x),e_n\ra\le 0$ for all $x\in \cB$, then $A_{n,\gamma}(z)$ is positive. Bellow, we discuss situations in which we have more explicit estimates on $A_{n,1}$. Estimates on $A_{n,\gamma}$ following by using the Riesz itteration.
\begin{cor}\label{maincor2} Assume that there exists $\delta>0$ such that 
$$
\delta\le\min\{|x_n|: x=(x',x_n)\in\cB,\;\la\bn(x),e_n\ra> 0\}.
$$
Then 
\begin{multline}\label{ah1}A_{n,1}(z)\ge\frac{(n-1)\omega_{n-1}}{(2\pi)^{n-1}}\left(\left(\int_{\cB^-} |\la\bn(x),e_n\ra |ds\right)\frac{1}{(2h_\Omega)^n}\left(\Gamma(n)-\Gamma(n,2h_\Omega z)\right)\right.\\-\left.\left(\int_{\cB^+} \la\bn(x),e_n\ra ds\right)\frac{1}{(2\delta)^n}\left(\Gamma(n)-\Gamma(n,2\delta z)\right)\right), 
\end{multline}
where $\cB^{+}:=\{x\in \cB: \la\bn(x),e_n\ra>0 \}$, and $\cB^{-}:=\{x\in \cB: \la\bn(x),e_n\ra\le0 \}$.
In particular, when $\cB^+=\emptyset$, we have
\begin{equation}\label{ah2}A_{n,1}(z)\ge\frac{(n-1)\omega_{n-1}}{(2\pi)^{n-1}}\left(\left(\int_{\cB^-} \la\bn(x),e_n\ra ds\right)\frac{1}{(2h_\Omega)^n}\left(\Gamma(n)-\Gamma(n,2h_\Omega z)\right)\right). \end{equation}
\end{cor}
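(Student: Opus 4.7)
The plan is to start directly from the formula~\eqref{a1gamma} defining $A_{n,1}(z)$, decompose the surface integral over $\cB$ according to the sign of $\la \bn(x), e_n\ra$, and bound the exponential weight $e^{2 x_n r}$ pointwise on each piece. Since $-\la \bn, e_n\ra = |\la \bn, e_n\ra|$ on $\cB^-$, the formula rewrites as
\[
A_{n,1}(z) = \frac{(n-1)\omega_{n-1}}{(2\pi)^{n-1}} \int_0^z \left( \int_{\cB^-} |\la \bn, e_n\ra|\, e^{2 x_n r} \, ds(x) - \int_{\cB^+} \la \bn, e_n\ra\, e^{2 x_n r} \, ds(x) \right) r^{n-1} \, dr,
\]
so that both inner integrands are pointwise nonnegative, the second entering with an overall minus sign.

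On $\cB^-$ I would use only the crude global bound $x_n \ge -h_\Omega$, giving $e^{2 x_n r} \ge e^{-2 h_\Omega r}$ for every $r \ge 0$; multiplying by the nonnegative factor $|\la \bn, e_n\ra|$ yields a lower bound on the positive contribution. On $\cB^+$, the hypothesis of the corollary gives $x_n \le -\delta$, hence $e^{2 x_n r} \le e^{-2 \delta r}$; since the $\cB^+$ piece enters with a minus sign, this upper bound on the positive quantity $\la \bn, e_n\ra\, e^{2 x_n r}$ is exactly what is needed to obtain a lower bound after subtraction. Once the $x$-dependence has been replaced by a constant exponential rate, Fubini's theorem reduces each contribution to the one-dimensional integral
\[
\int_0^z e^{-2 a r} r^{n-1} \, dr = \frac{1}{(2 a)^n} \int_0^{2 a z} e^{-t} t^{n-1} \, dt = \frac{\Gamma(n) - \Gamma(n, 2 a z)}{(2 a)^n},
\]
via the substitution $t = 2 a r$ and the definition of the incomplete Gamma function recalled in the introduction; this is applied with $a = h_\Omega$ and $a = \delta$ respectively.

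Collecting the resulting constants produces inequality~\eqref{ah1}, and the special case $\cB^+ = \emptyset$ is obtained by dropping the second term to recover~\eqref{ah2}. I do not anticipate a serious obstacle: the argument is essentially a careful pointwise comparison of $e^{2 x_n r}$ with a single global exponential on each of $\cB^\pm$, combined with the explicit evaluation of $\int_0^z e^{-2 a r} r^{n-1}\, dr$. The only genuine choice is the split of $\cB$ along the sign of $\la \bn, e_n\ra$, dictated by the fact that the two pieces contribute with opposite signs and must therefore be majorised in opposite directions to produce a uniform lower bound.
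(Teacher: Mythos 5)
Your proposal is correct and follows essentially the same route as the paper: split $A_{n,1}$ over $\cB^{\pm}$, bound $e^{2x_n r}$ pointwise by $e^{-2h_\Omega r}$ on $\cB^-$ and by $e^{-2\delta r}$ on $\cB^+$, and evaluate $\int_0^z e^{-2a r} r^{n-1}\,dr$ via the incomplete Gamma function. (You also implicitly correct a small typo in the statement of~\eqref{ah2}, where the absolute value on $\la\bn,e_n\ra$ has been dropped.)
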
 
\begin{proof} By Theorem~\ref{main} we have
\begin{multline*}A_{n,1}(z)=\frac{(n-1)\omega_{n-1}}{(2\pi)^{n-1}}\left(\int_{0}^z \int_{\cB^-} |\la\bn(x),e_n\ra| e^{2x_nr}r^{n-1}\,ds(x)dr\right.\\-\left.\int_{0}^z \int_{\cB^+} \la\bn(x),e_n\ra e^{2x_nr}r^{n-1}\,ds(x)dr\right). \end{multline*}
Since $x_n<0$, the function $e^{2x_nr}$ is decreasing. Therefore,
\begin{eqnarray*}\int_{0}^z \int_{\cB^-} |\la\bn(x),e_n\ra| e^{2x_nr}r^{n-1}\,ds(x)dr &\ge& \left(\int_{\cB^-} |\la\bn(x),e_n\ra| ds(x)\right)\left(\int_{0}^z e^{-2h_\Omega r}r^{n-1}\,dr\right)\\&=&\left(\int_{\cB^-} |\la\bn(x),e_n\ra| ds(x)\right)\frac{1}{(2h_\Omega)^n}\left(\Gamma(n)-\Gamma(n,2h_\Omega z)\right).
\end{eqnarray*}
Similarly 
\begin{eqnarray*}\int_{0}^z \int_{\cB^+} \la\bn(x),e_n\ra e^{2x_nr}r^{n-1}\,ds(x)dr&\le&\left(\int_{\cB^+} \la\bn(x),e_n\ra ds(x)\right)\left(\int_{0}^z e^{-2\delta r}r^{n-1}\,dr\right)\\&=&\left(\int_{\cB^+} \la\bn(x),e_n\ra ds(x)\right)\frac{1}{(2\delta)^n}\left(\Gamma(n)-\Gamma(n,2\delta z)\right).
\end{eqnarray*}
This completes the proof. 
\end{proof}
Theorem~\ref{maincor}   is an immediate consequence of Theorem~\ref{main} and Corollary~\ref{maincor2}.
\begin{proof}[\bf Proof of Theorem~\ref{maincor}] Let  $\tilde{\Omega}:=\cF\times (-h_\Omega,0)$. According to Lemma~\ref{monotoni}
\[R_1^\Omega(z)\ge R_1^{\tilde\Omega}(z).\]
Thus, it is enough to find a lower bound for $\tilde R_1(z)$. Using Corollary~\ref{maincor2} we conclude 
\[  R_1^{\tilde\Omega}(z)\ge C_{n,1}|\cF|z^{n}+\frac{(n-1)\omega_{n-1}}{(2\pi)^{n-1}}\left(\frac{|\cF|}{(2h_\Omega)^n}\left(\Gamma(n)-\Gamma(n,2h_\Omega z)\right)\right).\]
\end{proof} 
\begin{rem}\label{cylindr}For a cylindrical domain $\cF\times(-h,0)$, the sloshing eigenvalues can be calculated explicitly using separation of variable (see~\cite{Petal10}). They  are of the form
 \[\sqrt{\mu_k}\tanh({\sqrt{\mu_k}h_\Omega}),\]
 where $\mu_k$ is the $k$-th Neumann eigenvalues of the Laplacian on $\cF$. 
One can try to get an estimate for the Riesz means using this explicit expression of the eigenvalues.  We shall see below that it does not give an asymptotically sharp bound. \\For $\Omega$ we have
 \begin{eqnarray*}R_1(z)&=&\sum_{k}(z-\sqrt{\mu_k}\tanh({\sqrt{\mu_k}h}))_+\\&\ge&\sum_{k}(z-\sqrt{\mu_k})_+\\
&\ge&\frac{1}{2z}\sum_{k}(z^2-{\mu_k})_+\\
&=&\frac{1}{2z}{R}_1^\cF(z^2,\Delta_N),\end{eqnarray*}
   where ${R}^\cF_1(z^2,\Delta_N)$ is the Riesz mean of the Neumann Laplace eigenvalues on $\cF$. We can use Harrell--Stubbe's result~\cite{HS16} on lower bounds for ${R}^\cF_1(z^2,\Delta_N)$ to get
\begin{eqnarray*}
 {R}^\cF_1(z^2,\Delta_N)= \sum_{k}(z^2-\mu_k)_{+}&\ge& L_{1,n-1}^{cl}|\cF|z^{n+1}+\frac1{4}\,L_{1,n-2}^{cl}\frac{|\cF|}{\delta_{\bf v}(\cF)}\,z^{ n  }\\&&-\frac{1}{96}\,(2\pi)^{2-n}\omega_n\frac{|\cF|}{\delta_{\bf v}(\cF)^2}\,z^{ {n-1} },
\end{eqnarray*}
where $\delta_{\bf v}(\cF)$ is the width of $\cF$ in the direction of $\bf v\in\R^{n-1}$ and
\begin{equation}\label{ccnst}
  L_{1,n-1}^{cl}:=   \frac{1}{(4\pi)^{\frac{n-1}{2}}\Gamma(1+{\frac {n+1} 2})}.
\end{equation}
Comparing $ L_{1,n-1}^{cl}$ with $C_{n,1}$ we have
\begin{equation}L_{1,n-1}^{cl}=\frac{2n}{n+1}{C_{n,1}}.\end{equation}
Therefore
\begin{eqnarray*}R_1(z)&\ge& \frac{n}{n+1}C_{n,1}|\cF|z^{n}+\frac1{8}\,L_{1,n-2}^{cl}\frac{|\cF|}{\delta_{\bf v}(\cF)}\,z^{ n-1  }\\&&-\frac{1}{192}\,(2\pi)^{2-n}\omega_n\frac{|\cF|}{\delta_{\bf v}(\cF)^2}\,z^{ {n-2} }.
\end{eqnarray*}
By Lemma~\ref{lemav}, this bound holds for any proper subset $\Omega$ of $\cF\times(-h,0)$ with $\partial\Omega=\cF\cup\cB$.  It gives a two--term lower bound only depending  on the geometry of $\cF$.  When $n\to\infty$, the coefficient of the leading term tends to the optimal constant $C_{n,1}$.  
 \end{rem}
We now prove the main theorem. 
\begin{proof}[\bf Proof of Theorem~\ref{main}] The proof follows from Lemma 2.1 choosing a suitable  family of  test functions. 
Consider the family of harmonic functions 
$$ f_{\xi'}(x)=e^{ix'\xi'+x_n|\xi'|}$$ 
where $x=(x',x_n)\in \R^{n-1}\times\R$ and $\xi'\in \R^{n-1}$. Replacing in \eqref{mvarg}  with  $M=\R^{n-1}$ and $M_0=\{|\xi'|\le z\}$ we get

\begin{align*}\sum_j(z-\nu_j)_+\int_{\R^{n-1}}|\hat\varphi_j(\xi')|^2\,d\xi'\ge& |\cF|\int_{|\xi'|\le z}(z-|\xi'|)\,d\xi'\\&-\int_{|\xi'|\le z} \int_{\cB} \la\bn,(0,|\xi'|)\ra e^{2x_n|\xi'|}\,dsd\xi' , \end{align*}
where $\hat{\varphi}_j(\xi')=\int_\cF e^{ix'\xi'}\varphi_j(x')ds$ is the Fourier transform of $\varphi_j|_\cF$. 
Therefore,
\[R_1(z)=\sum_j(z-\nu_j)_+\ge\frac{\omega_{n-1}}{n(2\pi)^{n-1}}|\cF| z^n -\frac{(n-1)\omega_{n-1}}{(2\pi)^{n-1}}\int_{0}^z \int_{\cB} \la\bn,e_n\ra e^{2x_nr}r^{n-1}\,dsdr, \]
where $\omega_{n-1}$ is the volume of a unit ball in $\R^{n-1}$. 
Proceeding with the Riesz iteration, we obtain inequality~\eqref{rieszinq}. This completes the proof. \end{proof}
We end this subsection with an example. 
\begin{ex}Let consider the cone 
$$
\cC:=\{(x,y,z): \tan^2(\alpha)(x^2+y^2)=(z+h)^2,\quad z\in( -h,0)\}\subset \R^3,
$$  
where $\alpha$ is the interior angle between $\cB$ and the free surface $\cF=\cC\cap \R^2\times\{0\}$. Computing \eqref{a1gamma} we obtain
\begin{eqnarray*}A_{3,1}(z)&=&\cos(\alpha)\int_{0}^z \int_0^{\frac{h}{|\tan(\alpha)|}} \frac{1}{|\cos(\alpha)|}e^{-2|\tan(\alpha)|tr}tr^{2}\,dt dr\\&=&\frac{1}{4|\tan(\alpha)|\tan(\alpha)}\int_{0}^z\left(1-e^{-2hr}-2hre^{-2hr}\right)dr\\
&=&\frac{1}{4|\tan(\alpha)|\tan(\alpha)}\big(z-\frac{1}{2h}(1-e^{-2hz}+ze^{-2hz})+\frac{1}{4h^2}(1-e^{-2hz})\big).
\end{eqnarray*}
Hence,
\[R^{\cC}_{3,1}(z)\ge\frac{1}{12\pi}|\cF|z^{3} +\frac{1}{4|\tan(\alpha)|\tan(\alpha)}z+c,\]
where $c=A_{3,1}(z)-\frac{1}{4|\tan(\alpha)|\tan(\alpha)}z$.\\
 One can ask if we can improve the power of $z$ in the second term. \end{ex}
\subsection{Riesz means of sloshing problem on domains in $\R^2$} In this section, we prove Theorem~\ref{triangle}. Let us begin with the example which will be  used   in the proof of Theorem~\ref{triangle}.

\begin{ex}[Triangular domain]\label{tri1} Let $\Omega\subset \R^2$ be a triangle  with interior angles  $\alpha,\beta\in(0,\pi)$ as shown in Figure 2 (note that   all the following calculations remain the same if  one considers a trapezoid).      Segment $\overline{OQ}$ is the free part $\cF$ of the boundary  with length $L$, and  $\cB=\overline{OP}\cup\overline{PQ}$. Replacing in \eqref{a1gamma} we have
\begin{figure}\label{triangle1}\includegraphics{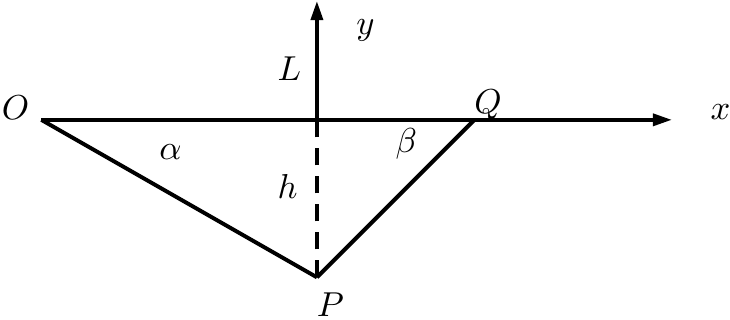}\caption{}\end{figure}
\[A_{2,1}(z)=-\frac{1}{\pi}\int_{0}^z \int_{\overline{OP}\cup\overline{PQ}} \la\bn,(0,1)\ra re^{2yr}\,dsdr .\]
First  we calculate the above integral for $\alpha,\beta\in(0,\pi)\setminus\{\frac{\pi}{2}\}$
\begin{align*}-\int_{0}^z \int_{\overline{OP}} \la\bn,(0,r)\ra re^{2yr}\,dsdr&= \int_{0}^zr\cos(\alpha)\left( \int_{0}^{\frac{h}{|\tan(\alpha)|}} e^{-2x|\tan(\alpha)|r}\frac{1}{|\cos(\alpha)|}\,dx\right)dr\\
&=\int_{0}^z\frac{1}{2\tan(\alpha)}\left(1- e^{-2hr}\right)\,dr\\
&=\frac{1}{2\tan(\alpha)}\left(z-\frac{1}{2h}(1-e^{-2hz})\right).\end{align*}
Similarly, we get 
\[-\int_{0}^z \int_{\overline{PQ}} \la\bn,(0,r)\ra e^{2yr}\,dsdr=\frac{1}{2\tan(\beta)}\left(z-\frac{1}{2h}(1-e^{-2hz})\right).\]
If $\alpha$ or $\beta$ is equal to $\frac{\pi}{2}$ then $\la\bn,(0,1)\ra=0$ on $\overline{OP}$ or $\overline{PQ}$ respectively. We make a convention that $\frac{1}{\tan(\frac{\pi}{2})}=0$.
Therefore, we get an explicit formula for $A_{2,1}(z)$ in terms of interior angles $\alpha,\beta\in(0,\pi)$:
\[A_{2,1}(z)=\frac{1}{2\pi}\left(\frac{1}{\tan(\alpha)}+\frac{1}{\tan(\beta)}\right)\left(z-\frac{1}{2h}(1-e^{-2hz})\right).\]
We conclude\begin{equation}R_1(z)\ge \frac{1}{2\pi}|\cF|z^{2}+\frac{1}{2\pi}\left(\frac{1}{\tan(\alpha)}+\frac{1}{\tan(\beta)}\right)\left(z-\frac{1}{2h}(1-e^{-2hz})\right).\end{equation}
\end{ex}\smallskip

\begin{proof}[\bf Proof of Theorem~\ref{triangle}]
%
By Lemma~\ref{monotoni},  we know 
$$R^{\Omega}_{2,\gamma}(z)\ge R^{\tilde\Omega}_{2,\gamma}(z).$$
 Having Theorem~\ref{main}, it is enough to estimate
$A_{2,1}^{\tilde \Omega}(z)$.
 Consider Figure~\ref{pic1}. By assumption $\overline{PP_1}$ and $\overline{QQ_1}$ are line segments and $\tilde\cB_c=\tilde\cB\setminus(\overline{PP_1}\cup\overline{QQ_1})$.
 \[A_{2,1}^{\tilde\Omega}(z)=-\frac{1}{\pi}\int_{0}^z \int_{\overline{PP_1}\cup\overline{QQ_1}\cup{\tilde\cB_c}} \la\bn,(0,r)\ra e^{2yr}\,dsdr.\]
 Following the same calculation as in Example~\ref{tri1}  we obtain
 \[-\frac{1}{\pi}\int_{0}^z \int_{\overline{PP_1}\cup\overline{QQ_1}} \la\bn,(0,r)\ra e^{2yr}\,dsdr=\frac{1}{2\pi}\left(\frac{1}{\tan(\alpha)}+\frac{1}{\tan(\beta)}\right)\left(z-\frac{1}{2\delta}(1-e^{-2\delta z})\right).\]
 In order to compute the remaining term, let $$\tilde\cB_{c}^+:=\{(x,y)\in\tilde\cB\,:\, \la\bn,(0,1)\ra=\cos(\theta(x,y))\ge0\}$$ and $$\tilde\cB_{c}^-:=\{(x,y)\in\tilde\cB\,:\, \la\bn,(0,1)\ra=\cos(\theta(x,y))<0\},$$ 
 where $\theta(x,y)$ is the angle between $\bn$ and the $(0,1)$.
 Hence
 \begin{eqnarray*}-\frac{1}{\pi}\int_{0}^z \int_{\tilde\cB_c^+\cup \tilde\cB_c^-} \la\bn,(0,1)\ra re^{2yr}\,dsdr&\ge&- \frac{1}{\pi}\int_{0}^z \int_{\tilde\cB_c^+} r\cos(\theta(x,y))e^{-2\delta r}\,dsdr\\
\nonumber&& + \frac{1}{\pi}\int_{0}^z \int_{\cB_c^-} r|\cos(\theta(x,y))|e^{-2h r}\,dsdr\\
&=&-\frac{1}{2\pi\delta}\left(\frac{1}{2\delta}-\frac{1}{2\delta}e^{-2\delta z}-ze^{-2\delta z}\right)\int_{\tilde\cB_c^+} \cos(\theta(x,y))ds\\
\nonumber&&\frac{1}{2\pi h}\left(\frac{1}{2h}-\frac{1}{2h}e^{-2h_z}-ze^{-2h z}\right)\int_{\cB_c^-} |\cos(\theta(x,y))|ds\\
\nonumber&\ge&-\frac{1}{2\pi\delta}\left(\frac{1}{2\delta}-\frac{1}{2\delta}e^{-2\delta z}-ze^{-2\delta z}\right)|\tilde\cB_c|.
 \end{eqnarray*}
   Hence 
 \[R^{\Omega}_{2,1}(z)\ge\frac{1}{2\pi}|\cF|z^{2}+ \frac{1}{2\pi}\left(\frac{1}{\tan(\alpha)}+\frac{1}{\tan(\beta)}\right)z+c,\]
 where 
 $$c=\frac{1}{4\pi\delta}\left(\frac{1}{\tan(\alpha)}+\frac{1}{\tan(\beta)}\right)(1-e^{-2\delta z})-\frac{1}{2\pi\delta}\left(\frac{1}{2\delta}-\frac{1}{2\delta}e^{-2\delta z}-ze^{-2\delta z}\right)|\tilde\cB_c|.$$  
 Applying the Riesz iteration on both sides of the inequality completes the proof. \end{proof}
 It is clear from the proof of Theorem~\ref{triangle} that it is not necessary  to assume that $\cF$ is connected in the statement.   \\
If in Theorem~\ref{triangle} $\alpha=\beta=\frac{\pi}{2}$, then 
$A_{2,\gamma}(z)=O(z^{\gamma-1})$. Hence, the power of $z$ in the second term of lower bound \eqref{1.4} is not optimal.  However, for a rectangular domain we can do a more explicit computation of its Reisz means and get a two-term asymptotically sharp lower bound. It immediately leads to the same bound on domains satisfying the John condition. 

%

\begin{prop}\label{correctangle} Let $\Omega$ be a bounded domain in $\R^2$ as in \eqref{hypothesis} with free part $\cF=(0,\ell)\times\{0\}$. Assume that $\Omega$ satisfies the John condition  then
 \[R^\Omega_\gamma(z)\ge\frac{\ell}{\pi(\gamma+1)}z^{\gamma+1}+\frac{1}{2}z^{\gamma}.\]

\end{prop}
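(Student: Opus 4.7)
The plan is to exploit the monotonicity of sloshing eigenvalues (Lemma~\ref{monotoni}) to reduce the estimate to a rectangular exhaustion whose spectrum is explicit. Since $\Omega$ is bounded and satisfies the John condition, $\Omega\subset \tilde\Omega_h:=(0,\ell)\times(-h,0)$ for every sufficiently large $h$. Lemma~\ref{monotoni} gives $\nu_k(\Omega)\le \nu_k(\tilde\Omega_h)$, and hence $R^\Omega_\gamma(z)\ge R^{\tilde\Omega_h}_\gamma(z)$ for each such $h$. The task therefore reduces to a sharp estimate for the rectangle, together with a passage $h\to\infty$.

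The rectangular eigenvalues are computable by separation of variables as in Remark~\ref{cylindr}: the Neumann Laplace eigenvalues of $(0,\ell)$ are $\mu_k=(\pi(k-1)/\ell)^2$, so
\[
\nu_k(\tilde\Omega_h)=\frac{\pi(k-1)}{\ell}\tanh\!\left(\frac{\pi(k-1)h}{\ell}\right).
\]
Since $\tanh\le 1$ and $\nu_k(\tilde\Omega_h)\nearrow \pi(k-1)/\ell$ as $h\to\infty$, taking the limit inside the Riesz mean (each summand increases pointwise in $z$) yields $\nu_k(\Omega)\le \pi(k-1)/\ell$ for all $k\ge 1$, and hence
\[
R^\Omega_\gamma(z)\ge \sum_{k=0}^{\infty}\bigl(z-\pi k/\ell\bigr)_+^\gamma.
\]

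The remaining ingredient is a Hermite–Hadamard style estimate for the decreasing convex function $f(x):=(z-\pi x/\ell)^\gamma$ on $[0,L]$ with $L:=\ell z/\pi$ (convexity requires $\gamma\ge 1$). Since the trapezoidal rule overestimates integrals of convex functions, setting $N:=\lfloor L\rfloor$ gives
\[
\sum_{k=0}^{N} f(k)\ge \int_0^N f(x)\,dx+\frac{f(0)+f(N)}{2}.
\]
Evaluating the integral contributes $\frac{\ell}{\pi(\gamma+1)}\bigl(z^{\gamma+1}-(z-\pi N/\ell)^{\gamma+1}\bigr)$, while the endpoint term contributes $\frac{z^\gamma}{2}+\frac{(z-\pi N/\ell)^\gamma}{2}$. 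Isolating the target expression $\frac{\ell z^{\gamma+1}}{\pi(\gamma+1)}+\frac{z^\gamma}{2}$ reduces the proof to showing that the leftover residual satisfies
\[
\frac{(z-\pi N/\ell)^\gamma}{2}-\frac{\ell(z-\pi N/\ell)^{\gamma+1}}{\pi(\gamma+1)}\ge 0.
\]

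The main (and only) subtle point is this residual inequality. Setting $t:=z-\pi N/\ell$, it rearranges to $t\le \pi(\gamma+1)/(2\ell)$. By construction of the floor, $t<\pi/\ell$, so it suffices to verify $\pi/\ell\le \pi(\gamma+1)/(2\ell)$, i.e.\ $\gamma\ge 1$. This is precisely where the hypothesis enters, and it also explains why the same argument cannot produce the second term $\tfrac12 z^\gamma$ for $\gamma<1$: the Hermite–Hadamard correction becomes insufficient to absorb the integral tail. One further sanity check is the regime $z<\pi/\ell$, where $N=0$ and the sum degenerates to $z^\gamma$; the claimed bound then reads $\tfrac{z^\gamma}{2}\ge \tfrac{\ell z^{\gamma+1}}{\pi(\gamma+1)}$, which again uses $\gamma\ge 1$ in exactly the same way.
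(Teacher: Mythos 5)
Your proof is correct, and the overall structure matches the paper's: both reduce the problem by monotonicity (Lemma~\ref{monotoni}) to a rectangle $(0,\ell)\times(-h,0)$, write the rectangular sloshing eigenvalues explicitly via separation of variables, and drop the $\tanh$ factor to reduce to estimating $\sum_{k\ge 0}(z-\pi k/\ell)_+^\gamma$ from below. (The $h\to\infty$ passage you invoke is superfluous --- $\tanh\le 1$ already gives $\nu_k(\cR)\le \pi(k-1)/\ell$ for any fixed $h$, and the paper simply takes $h=h_\Omega$.)

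Where you genuinely diverge is the final arithmetic step. The paper proves the $\gamma=1$ case via the elementary identity of Lemma~\ref{simple}, $\sum_{k\ge0}(R-k)_+\ge\frac12(R^2+R)$, and then tacitly obtains the general $\gamma\ge1$ statement by Riesz iteration. You instead apply the trapezoidal-rule (Hermite--Hadamard) bound for the convex function $f(x)=(z-\pi x/\ell)^\gamma$ directly, which treats all $\gamma\ge 1$ at once without iteration, and you correctly identify and verify the residual inequality $\frac{t^\gamma}{2}\ge\frac{\ell t^{\gamma+1}}{\pi(\gamma+1)}$ for $t=z-\pi\lfloor\ell z/\pi\rfloor/\ell\in[0,\pi/\ell)$, noting it is precisely the point where $\gamma\ge1$ enters. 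For $\gamma=1$ your bound collapses to the paper's Lemma~\ref{simple}, so the two routes agree at the base case; your route buys a uniform, iteration-free argument and a transparent explanation of why the constant $\tfrac12$ fails for $\gamma<1$, while the paper's route isolates a cleaner closed-form identity that is reused elsewhere (e.g.\ in Theorem~\ref{recdir}).
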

\begin{proof}
By Lemma \ref{monotoni}, it is enough to prove the inequality for  $\cR=(0,\ell)\times(-h_\Omega,0)$
 where $h_\Omega$ is the depth of $\Omega$. With the notation of Remark~\ref{cylindr}, we have $\mu_k=\frac{k^2\pi^2}{\ell^2}$, $k\in\Z_+$, and 
  \[\nu_k(\cR)=\frac{k\pi}{\ell}\tanh\left(\frac{k\pi}{\ell}h_\Omega\right).\]
  Hence, we have
 \[R_1^\cR(z)=\sum_{k}(z-\nu_k)_+=\sum_{k}\left(z-\frac{k\pi}{\ell}\tanh\left(\frac{k\pi}{\ell}h_\Omega\right)\right)_+\ge\sum_{k}\left(z-\frac{k\pi}{\ell}\right)_+.\]
 We now use the following simple Lemma. 
 \begin{lem}\label{simple} For any $R\ge0$ and $k\in \Z_+$ we have
 \[\frac{1}{2}\left(R^2+R\right)\le\sum_{k\ge0}(R-k)_+\le\frac{1}{2}\left(R^2+R+1\right).\]
 \end{lem}
 \begin{proof} The statement follows by a simple calculation. 
 \begin{eqnarray*}\sum_{k\ge0}(R-k)_+&=&R+R[R]-\frac{[R]^2}{2}-\frac{[R]}{2}\\
 &=&\frac{1}{2}\left(R^2+R\right)+\frac{1}{2}(R-[R])(1-R+[R])). \end{eqnarray*} We conclude by 
 \[0\le\frac{1}{2}(R-[R])(1-R+[R]))\le\frac{1}{2}.\]
 \end{proof}
Using the above lemma, we get
 \[R_1(z)=\frac{\pi}{\ell}\sum_{k\ge0}(\frac{\ell}{\pi}z-{k})_+\ge\frac{\ell}{2\pi}z^2+\frac{1}{2}z.\]
This completes the proof.
\end{proof}

\subsection{Bound on sum of  eigenvalues} We state and prove a more general version of Theorem~\ref{kroger}. 
\begin{thm}\label{generalthm} For $n\ge2$ the eigenvalues of problem \eqref{slosh} satisfies 
\begin{eqnarray*}\frac{1}{k}\sum_{j=1}^k\nu_j&\le&\frac{n-1}{n} \left(W_{n,k}-\frac{1}{W_{n,k}}(\nu_{k+1}-{W_{n,k}})^2\right)+W_{n,k}^{-(n-1)}c_{\cB}(\nu_{k+1}),\end{eqnarray*}
where $$c_\cB(R):=(n-1)\omega_{n-1}|\cF|^{-1}\int_0^R\int_\cB\la\bn,(0,1)\ra r^{n-1} e^{2x_nr}ds\,dr$$ and 
$W_{n,k}= 2\pi\omega_{n-1}^{-\frac{1}{n-1}}\left(\frac{k}{|\cF|}\right)^{\frac{1}{n-1}}$. 
\end{thm}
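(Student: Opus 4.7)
My plan is to combine the semiclassical lower bound of Theorem~\ref{main} with the elementary relation $R_1(\nu_{k+1}) \ge k\nu_{k+1} - \sum_{j=1}^k \nu_j$. The latter is immediate from $(\nu_{k+1} - \nu_j)_+ \ge \nu_{k+1} - \nu_j$ together with the fact that $(\nu_{k+1} - \nu_j)_+ = \nu_{k+1} - \nu_j$ for $j\le k$. Substituting $z = \nu_{k+1}$ into Theorem~\ref{main} and isolating the partial sum gives
\[
\frac{1}{k}\sum_{j=1}^k \nu_j \le \nu_{k+1} - \frac{C_{n,1}|\cF|\nu_{k+1}^n}{k} - \frac{A_{n,1}(\nu_{k+1})}{k}.
\]

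The next step is to translate the constants into the $W_{n,k}$ and $c_\cB$ notation. The identity $C_{n,1} = \omega_{n-1}/[n(2\pi)^{n-1}]$ together with $W_{n,k}^{-(n-1)} = \omega_{n-1}|\cF|/[(2\pi)^{n-1}k]$ turns the semiclassical term into $-\nu_{k+1}^n/(n W_{n,k}^{n-1})$, and the definition of $c_\cB$ matches $-A_{n,1}(\nu_{k+1})/k$ with $W_{n,k}^{-(n-1)} c_\cB(\nu_{k+1})$. Thus the inequality reduces to
\[
\frac{1}{k}\sum_{j=1}^k \nu_j \le \nu_{k+1} - \frac{\nu_{k+1}^n}{n W_{n,k}^{n-1}} + W_{n,k}^{-(n-1)} c_\cB(\nu_{k+1}).
\]

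To close the argument I need a purely algebraic one-variable inequality. Setting $t := \nu_{k+1}/W_{n,k} \ge 0$ and clearing a factor of $W_{n,k}$, the desired estimate is equivalent to $nt - t^n \le (n-1)(2t - t^2)$ for every $t \ge 0$. Rearranging, this is the same as $t\cdot g(t) \ge 0$ where $g(t) := t^{n-1} - (n-1)t + (n-2)$, which since $t \ge 0$ reduces to showing $g(t) \ge 0$ on $[0,\infty)$. I would verify this by computing $g(1) = 0$, $g'(t) = (n-1)(t^{n-2}-1)$ so that $g'(1) = 0$, and $g''(t) = (n-1)(n-2)t^{n-3} \ge 0$ on $[0,\infty)$ for $n \ge 3$ (the case $n = 2$ gives $g\equiv 0$). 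Hence $g$ is convex on $[0,\infty)$ with a double zero at $t = 1$, and therefore $g \ge 0$ everywhere. Substituting back, $\nu_{k+1} - \nu_{k+1}^n/(nW_{n,k}^{n-1})$ is bounded above by $\tfrac{n-1}{n}(W_{n,k} - (\nu_{k+1} - W_{n,k})^2/W_{n,k})$, yielding the desired statement.

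The main obstacle is precisely the algebraic inequality $g(t) \ge 0$: although the convexity proof is short, it is the step that upgrades the plain Kr\"oger-type bound $\tfrac{n-1}{n}W_{n,k}$ into the quadratic-in-$(\nu_{k+1} - W_{n,k})$ refinement of Harrell--Stubbe. The remainder is bookkeeping to translate Theorem~\ref{main} into the form matching the statement, and some care must be taken to align the normalising constants hidden in the definition of $c_\cB$ with those coming out of $A_{n,1}$.
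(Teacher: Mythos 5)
Your proof follows essentially the same route as the paper's: specialize the averaged variational principle with the plane-wave test functions at $z=\nu_{k+1}$ (which you do by invoking Theorem~\ref{main} directly, rather than re-deriving the inequality from Lemma~\ref{lemav} with a free radius $R$ and then specializing $R=\nu_{k+1}$, as the paper does), use the identity $R_1(\nu_{k+1}) = k\nu_{k+1}-\sum_{j\le k}\nu_j$, and close with the one-variable inequality $nx-(n-1)-x^n\le -(n-1)(x-1)^2$. Your algebraic lemma $g(t)=t^{n-1}-(n-1)t+(n-2)\ge0$ for $t\ge0$ is exactly this inequality rewritten; the paper cites it from the appendix of Harrell--Stubbe, while you give a short self-contained convexity proof, which is a small but genuine improvement in transparency.

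One caveat on the bookkeeping, which you flagged but did not actually carry out: with $A_{n,1}(z)=-\frac{(n-1)\omega_{n-1}}{(2\pi)^{n-1}}\int_0^z\int_\cB\la\bn,e_n\ra e^{2x_n r}r^{n-1}\,ds\,dr$ and $W_{n,k}^{-(n-1)}=\frac{\omega_{n-1}|\cF|}{(2\pi)^{n-1}k}$, the quantity $-A_{n,1}(\nu_{k+1})/k$ equals $W_{n,k}^{-(n-1)}\cdot\frac{n-1}{|\cF|}\int_0^{\nu_{k+1}}\int_\cB\la\bn,e_n\ra e^{2x_nr}r^{n-1}\,ds\,dr$, which is $\omega_{n-1}^{-1}$ times $W_{n,k}^{-(n-1)}c_\cB(\nu_{k+1})$ with $c_\cB$ as literally printed in the theorem. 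The displayed definition of $c_\cB$ in the statement carries a spurious extra factor of $\omega_{n-1}$; redoing the Fourier-side integrals in the paper's own proof of Theorem~\ref{generalthm} shows the correct constant is $(n-1)|\cF|^{-1}$, which is precisely what your translation through $A_{n,1}$ produces. So your argument is right and in fact pins down the correct normalization, but the step you described as a match should be computed rather than asserted, since it exposes a typo in the printed theorem.
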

\begin{proof} The proof follows the same lines of  argument as in~\cite{HS16}. For the sake of completeness we present the whole argument. \\
By taking  $z=\nu_{k+1}$, $M=\R^{n-1}$ and $M_0= B_R$ in  Lemma~\ref{lemav}, where $B_R$ is the ball of radius $R$ centered at origin, we obtain
\begin{equation}\label{inq2}\nu_{k+1}R^{n-1}-\frac{n-1}{n}R^n\le W_{n,k}^{n-1}\left(\nu_{k+1}-\frac{1}{k}\sum_{j=1}^k\nu_j\right)+c_{\cB}(R).\end{equation}
 \begin{rem}One can  immediately get a counterpart of Kr\"oger's inequality for the eigenvalues of the sloshing problem by setting  $R^{n-1}=c_n(k+1)$ in inequality \eqref{inq2}:
 \[\sum_{j=1}^{k+1}\nu_j\le \frac{n-1}{n}c_n^{-\frac{1}{n-1}}(k+1)^{\frac{n}{n-1}}+(2\pi)^{1-n}\int_{B_R}\int_\cB\la\bn,(0,1)\ra|\xi'| e^{2x_n|\xi'|}dsd\xi'.\]
 \end{rem}
To simplify \eqref{inq2}, take $R=W_{n,k}x$. Then
\begin{eqnarray*}\frac{1}{k}\sum_{j=1}^k\nu_j-\frac{n-1}{n}W_{n,k}&\le& \frac{n-1}{n}W_{n,k}\left(x^n-\frac{n}{n-1} \frac{\nu_{k+1}}{W_{n,k}}x^{n-1}+\frac{n}{n-1} \frac{\nu_{k+1}}{W_{n,k}}-1\right)\\&&+W_{n,k}^{-(n-1)}c_{\cB}(W_{n,k}x).\end{eqnarray*}
If now we take $x=\frac{\nu_{k+1}}{W_{n,k}}$, then
\begin{eqnarray*}\frac{1}{k}\sum_{j=1}^k\nu_j-\frac{n-1}{n}W_{n,k}&\le& \frac{1}{n}W_{n,k}\left(nx-(n-1)-x^{n}\right)+W_{n,k}^{-(n-1)}c_{\cB}(W_{n,k}x).\end{eqnarray*}
Using  the refinement of the Young inequalities stated in~\cite[Appendix A]{HS16}, for any $n\ge2$ and every $x>0$, the following inequality holds
$$nx-(n-1)-x^{n}\le-(n-1)(x-1)^2.$$
Thus we conclude 
\begin{eqnarray*}\frac{1}{k}\sum_{j=1}^k\nu_j-\frac{n-1}{n}W_{n,k}&\le& -\frac{n-1}{n}\frac{1}{W_{n,k}}(\nu_{k+1}-{W_{n,k}})^2+W_{n,k}^{-(n-1)}c_{\cB}(\nu_{k+1}).\end{eqnarray*}
\end{proof}
 
\begin{proof}[\bf Proof of Theorem~\ref{kroger}] Under the assumption of Theorem~\ref{kroger} we have
 $$\cB^-:=\{x\in\cB\,:\, \la\bn,(0,r)\ra\le0\}=\cB.$$ 
 Hence, $c_\cB(x)\le0$ for any $x>0$, and the statement follows from Theorem~\ref{generalthm}. 

\end{proof}

\section{Mixed Steklov-Dirichlet eigenvalue problem}\label{4}
In this section we  prove the results stated in  Section~\ref{intro2}. We begin by recalling the monotonicity property of the eigenvalues of Steklov--Dirichlet eigenvalue problem \eqref{dirstek}.
\begin{lem}{\cite[Proposition 3.1.1]{Petal10}}\label{monotonidir}
Let $\Omega$ and $\tilde\Omega$ be bounded domains of $\R^n$ whose boundaries $\partial\Omega=\cF\cup\cB$ and $\partial\tilde\Omega=\tilde\cF\cup\tilde\cB$ are as described in \eqref{hypothesis}.  Let $\Omega$ be a proper subset of $\tilde\Omega$ and $\tilde\cF=\cF$. 
Then the following inequality holds
\[\eta_{k}(\Omega)>\eta_{k}(\tilde\Omega),\qquad\forall k\ge1.\]
In particular,
$$R^\Omega_\gamma(z,\cD_D)=\sum_j(z-\eta_j(\Omega))_+\le\sum_j(z-\eta_j(\tilde\Omega))_+= R^{\tilde\Omega}_\gamma(z,\cD_D).$$
\end{lem}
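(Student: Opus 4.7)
The plan is to reduce the statement to a min-max variational comparison, using that the Steklov--Dirichlet eigenvalues admit the characterisation
\[
\eta_k(\Omega)=\min_{V_k}\max_{0\ne f\in V_k}\frac{\int_\Omega|\nabla f|^2\,dx}{\int_\cF |f|^2\,ds},
\]
where $V_k$ ranges over $k$-dimensional subspaces of the admissible space
$\mathcal{A}(\Omega):=\{f\in H^1(\Omega):f|_\cB=0\}$. The key observation is that the Dirichlet condition on $\cB$ permits zero-extension: any $f\in \mathcal{A}(\Omega)$ extended by $0$ on $\tilde\Omega\setminus\Omega$ lies in $H^1(\tilde\Omega)$, because the internal trace along $\partial\Omega\cap\tilde\Omega\subset\cB$ is zero on both sides. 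Hence this extension belongs to $\mathcal{A}(\tilde\Omega)$, and it preserves the Rayleigh quotient, since the gradient vanishes a.e.\ outside $\Omega$ and $\cF=\tilde\cF$.

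First I would carry out this extension argument to obtain the weak inequality $\eta_k(\tilde\Omega)\le\eta_k(\Omega)$ for every $k\ge1$, by extending any test subspace $V_k\subset\mathcal{A}(\Omega)$ to a subspace $\tilde V_k\subset\mathcal{A}(\tilde\Omega)$ of the same dimension (the extension map is injective and isometric for the relevant quadratic forms) and applying the min-max formula.

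For the strict inequality, I would argue by contradiction via unique continuation. Suppose $\eta_k(\tilde\Omega)=\eta_k(\Omega)$ for some $k$. Let $\varphi_1,\dots,\varphi_k$ be the first $k$ eigenfunctions on $\Omega$ and $\tilde\varphi_1,\dots,\tilde\varphi_k$ their zero-extensions to $\tilde\Omega$. By the argument above, $\operatorname{span}(\tilde\varphi_1,\dots,\tilde\varphi_k)$ realises the min-max value $\eta_k(\tilde\Omega)$, so a suitable linear combination $\tilde\psi$ must be an eigenfunction of the Steklov--Dirichlet problem on $\tilde\Omega$ (with eigenvalue $\le \eta_k(\tilde\Omega)$). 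In particular $\tilde\psi$ is harmonic in $\tilde\Omega$ but vanishes identically on the nonempty open set $\tilde\Omega\setminus\overline{\Omega}$; unique continuation then forces $\tilde\psi\equiv0$, contradicting the fact that a nontrivial linear combination of the $\tilde\varphi_j$ cannot vanish on $\cF$ where the $\varphi_j|_\cF$ are linearly independent. Hence $\eta_k(\tilde\Omega)<\eta_k(\Omega)$.

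The Riesz-mean inequality is then immediate: for each $j$,
\[
(z-\eta_j(\Omega))_+\le (z-\eta_j(\tilde\Omega))_+,
\]
and summing over $j$ (after raising to the $\gamma$-th power, using monotonicity of $t\mapsto t^\gamma$ on $[0,\infty)$) yields $R^\Omega_\gamma(z,\cD_D)\le R^{\tilde\Omega}_\gamma(z,\cD_D)$. The main technical obstacle is the unique-continuation step needed to upgrade the weak inequality to a strict one; the extension-by-zero comparison itself is essentially soft, once one checks that the zero-extension lies in $H^1$, which hinges crucially on the Dirichlet (rather than Neumann) condition on $\cB$ and is precisely what reverses the direction of monotonicity compared to Lemma~\ref{monotoni}.
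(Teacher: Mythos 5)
The paper does not prove this lemma; it is cited directly from \cite[Proposition~3.1.1]{Petal10}, so there is no in-paper argument to compare against. Your reconstruction is the standard one and is essentially correct: extension by zero across the Dirichlet wall preserves both the Dirichlet energy and the $L^2(\cF)$-norm, and lies in the admissible class on $\tilde\Omega$ precisely because the trace vanishes on $\partial\Omega\cap\tilde\Omega\subset\cB$. This gives $\eta_k(\tilde\Omega)\le\eta_k(\Omega)$ by min--max, and is exactly the mechanism that reverses the direction of monotonicity relative to the Steklov--Neumann case, where Neumann conditions do not permit such a controlled extension.

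The step that deserves more care is the claim that, in the equality case, ``a suitable linear combination $\tilde\psi$ of the zero-extensions is an eigenfunction on $\tilde\Omega$.'' A zero-extension $\tilde\psi$ is never harmonic in $\tilde\Omega$ unless it is identically zero: the normal derivative of $\sum a_j\varphi_j$ along $\partial\Omega\cap\tilde\Omega\subset\cB$ is generically nonzero (it cannot vanish together with the Dirichlet trace, by Holmgren), so any nontrivial $\tilde\psi$ has a jump in its normal derivative there. Hence ``is an eigenfunction'' must be earned from the equality assumption rather than asserted. The clean way to do this is to take $u$ in the $k$-dimensional span of $\varphi_j|_\cF$ that is $L^2(\cF)$-orthogonal to the first $k-1$ eigenfunctions of $\cD_D^{\tilde\Omega}$; then $\langle\cD_D^{\tilde\Omega}u,u\rangle\ge\eta_k(\tilde\Omega)\|u\|^2$, which together with the opposite inequality coming from the energy comparison forces equality throughout, so that the zero-extension coincides with the $\tilde\Omega$-harmonic extension of $u$; only then does unique continuation apply. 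Alternatively, you can avoid the equality-case analysis entirely: since no nonzero zero-extension is harmonic in $\tilde\Omega$, the $\tilde\Omega$-harmonic replacement of each $u\ne0$ in the test space has strictly smaller energy, giving $\langle\cD_D^{\tilde\Omega}u,u\rangle<\eta_k(\Omega)\|u\|^2$ pointwise, and compactness of the unit sphere in a finite-dimensional subspace upgrades this to $\eta_k(\tilde\Omega)<\eta_k(\Omega)$. Either way the gap in your sketch is small and readily filled; the overall route and the remaining steps, including the passage to Riesz means, are correct.
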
 
\begin{proof}[\bf Proof of Theorem~\ref{adirstek}] Since $\Omega$ is subset of  the cylinder $\cF\times[-h_\Omega,0]=:\mathcal{R}$, where $h_\Omega$ is the depth of $\Omega$, by Lemma~\ref{monotonidir}  we have
$$R^\Omega_\gamma(z)\le R^\cR_\gamma(z).$$
For $\mathcal{R}$ the eigenvalues and eigenfunctions of problem \eqref{dirstek} can be explicitly calculated (see~\cite{Petal10}). They are of the form
 \[\sqrt{\lambda_k}\coth({\sqrt{\lambda_k}h_\Omega}),\]
 where $\lambda_k$ is the $k$-th Dirichlet eigenvalues of the Laplacian on $\cF$. 
 Hence,  we have
 \begin{eqnarray*}R^{\cR}_1(z)&=&\sum_{j}(z-\eta_j)_+=\sum_{j}(z-\sqrt{\lambda_j}\coth({\sqrt{\lambda_j}h_\Omega}))_+\\&\le&\sum_{j}(z-\sqrt{\lambda_j})_+.\end{eqnarray*}
 The sequence $\sqrt{\lambda_j}$ of square root of eigenvalues of  the Dirichlet Laplacian $-\Delta_D$ on $\cF$ is equal to the eigenvalues of the Navier fractional Laplacian $(-\Delta)^{1/2}_N$ on $\cF$.  We denote its $j$-th eigenvalue by $ \lambda_j((-\Delta)^{1/2}_N)$.
Musina and Nasarov in~\cite{MN14,MN16,MN16b} studied the fractional Laplacian with Navier and Dirichlet type boundary conditions. Let us recall that for an arbitrary $s>0$ the fractional Laplacian with Navier boundary conditions is defined by 
$$
(-\Delta_{Nv}^s f, f) = \sum_j \lambda_j^s |(f,\psi_j)|^2,
$$
where $\psi_j$ are eigenfunctions of the Dirichlet Laplacian $(-\Delta)_D$. The ÒDirichletÓ fractional Laplacian is defined by the closure from the class of functions  $f\in C_0^\infty(\mathcal F)$ of the quadratic form 
$$
((-\Delta)_D^s f,f) = \frac{1}{2\pi}\, \int |\xi|^{2s} | \hat f(\xi)|^{2}\, d\xi,
$$
where $\hat f$ is the Fourier transform of $f$.
It was proved in~\cite[Corollary 4]{MN14}  that for any $0<s<1$, the $j$-th eigenvalue $\lambda_j((-\Delta)^{s}_D)$ of the Dirichlet fractional Laplacian is strictly smaller than $\lambda_j((-\Delta)^{s}_{Nv})$ which implies
 \[\lambda_j((-\Delta)^{1/2}_{Nv})>\lambda_j((-\Delta)^{1/2}_D).\]
Therefore 
 \[\sum_{j}(z-\sqrt{\lambda_j})_+=\sum_{j}\left(z-\lambda_j((-\Delta)^{1/2}_{Nv})\right)_+<\sum_{j}\left(z-\lambda_j((-\Delta)^{1/2}_D)\right)_+.\]
 We now use the bound on the Riesz means of the Dirichlet fractional Laplacian $(-\Delta)^{1/2}_D$ proved by Laptev in~\cite[Corollary 2.3]{Lap97}.
 \[\sum_{j}\left(z-\lambda_j((-\Delta)^{1/2}_D)\right)_+\le (2\pi)^{-(n-1)}|\cF|z^n\left(\int_{\R^{n-1}}(1-|\xi|)_+d\xi\right)=C_{n,1}|\cF|z^n.\]
 This completes the proof. 
 \end{proof}
 \begin{rem}
 Applying the Laplace transform on inequality~\eqref{1.6}, we can get an immediate upper bound for the trace of the heat kernel of operator $\cD_D$. More precisely
 \[\sum_{j=0}^\infty e^{-\eta_jt}\le\frac{\Gamma(n)}{(4\pi)^{\frac{n-1}{2}}\Gamma(\frac{n+1}{2})}\frac{|\cF|}{t^{n-1}}.\]
 \end{rem}
 \begin{proof}[\bf Proof of Theorem~\ref{recdir}]
We first show that inequality \eqref{eqrecdir} holds for a rectangular domain   $\cR=(0,\ell)\times(-h,0)$. With the same notations as in the proof of Theorem~\ref{adirstek}, we have $\lambda_j((0,\ell))=\frac{j^2\pi^2}{\ell^2}$, $j\in\Z_+$ and 
  \[\eta_j(\cR)=\frac{j\pi}{\ell}\coth\left(\frac{j\pi}{\ell}h\right).\]
Thus 
 \[R^{\cR}_1(z)=\sum_{j}(z-\eta_j)_+=\sum_{j>0}\left(z-\frac{j\pi}{\ell}\coth\left(\frac{j\pi}{\ell}h\right)\right)_+\le\sum_{j}\left(z-\frac{j\pi}{\ell}\right)_+.\]
By Lemma~\ref{simple} we have
 \[\frac{\pi}{\ell}\sum_{j>0}\left(\frac{\ell}{\pi}z-{j}\right)_+\le\frac{\pi}{2\ell}\left(\frac{\ell^2}{\pi^2}z^2-\frac{\ell}{\pi}z+1
\right)=\frac{\ell}{2\pi}z^2-\frac{1}{2}z+\frac{\pi}{2\ell}.\]
By monotonicity property in Lemma~\ref{monotonidir}  the statement of the theorem follows. 
\end{proof}
\begin{rem}\label{lowdir}Let   $\Omega\subset \R^2_-$ be a domain with $\partial\Omega=\cF\cup\cB$, $\cF=(0,\ell)$, containing a rectangular domain $\cR=(0,\ell)\times(-h,0)$. We can  get a two--term asymptotically sharp lower bound on $R^\Omega_1(z)$ with an optimal leading term. Indeed, using  the monotonicity result together with Lemma~\ref{simple} and the following inequality   $$x\coth(x)\le1+x$$
 for $z\ge1$ we obtain
\begin{eqnarray*}R^\Omega_1(z)\ge R^\cR_1(z)&=&\sum_{j>0}\left(z-\frac{j\pi}{\ell}\coth\left(\frac{j\pi}{\ell}h\right)\right)_+\\&\ge&\sum_{j>0}\left(z-1-\frac{j\pi}{\ell}\right)_+\\&\ge&\frac{\ell}{2\pi}(z-1)^2-\frac{1}{2}(z-1)\\
&=&\frac{\ell}{2\pi}z^2-\left({\frac 1 2}+\frac{\ell}{\pi}\right)z+\frac{1}{2}. \end{eqnarray*}
\end{rem}
\section{Appendix: Two-term asymptotics for $R_\gamma(z)$\\~\\ \normalfont{by Francesco Ferrulli and Jean Lagac\'e}}\label{appendix}

In this appendix we obtain in dimension $2$, under conditions slightly different than those of Theorem \ref{triangle} two-term asymptotics \eqref{FL} and \eqref{FL2} rather than lower and upper bounds for respectively the sloshing and the Steklov-Dirichlet problem. The conditions are those required for Theorems 1.2.2 and 1.3.2, and Propositions 1.2.6 and 1.3.5 of \cite{LPPS}.

Let us start by introducing \emph{local John's condition}.
\begin{defn}
 A corner point $V$ between $\CF$ and $\CW$ is said to satisfy  \emph{local John's condition} if there exists a neighbourhood $\mathcal O_V$ of $V$ such that $\mathcal O_V \cap \CW \subset \CF \times (-\infty,0)$.
\end{defn}

We now prove the following theorem.

\begin{thm} \label{thm:riesz1}
 Let $\Omega$ be a simply connected bounded Lipschitz planar domain with the sloshing surface $\CF$ of length $L$ and walls $\CW$ which are $C^1$-regular near the corner points $A$ and $B$. Let $\alpha$ and $\beta$ be the interior angles between $\CW$ and $\CF$ at the points $A$ and $B$ resp., and assume either that
 \begin{itemize}
  \item $0 < \beta \le \alpha < \pi/2$; or
  \item $0 < \beta < \alpha = \pi/2$ and $A$ satisfies  local John's condition; or
  \item $\beta = \alpha = \pi/2$ and both $A$ and $B$ satisfy local John's condition.
 \end{itemize}
Then, the following two-term asymptotics for the Riesz mean of order $\gamma > 0$ holds as $z \to \infty$ :
 \begin{equation} \label{eq:riesz1SN}
  R_\gamma^\Omega(z,\CD_N) = C_{2,\gamma} L z^{\gamma + 1} + \frac{\pi}{8} \left(\frac 1 \alpha + \frac 1 \beta \right)z^{\gamma} + \smallo{z^{\gamma}},
 \end{equation}
 and
 \begin{equation} \label{eq:riesz1SD}
  R_\gamma^\Omega(z,\CD_D) = C_{2,\gamma} z^{\gamma + 1}  - \frac{\pi}{8} \left(\frac 1 \alpha + \frac 1 \beta \right)z^{\gamma} + \smallo{z^{\gamma}},
 \end{equation}
where $C_{2,\gamma} = ((1 + \gamma) \pi)^{-1}$.
\end{thm}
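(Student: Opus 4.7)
The plan is to deduce both asymptotics from the two-term expansions for the individual eigenvalues proved in \cite{LPPS}. Under the hypotheses of the theorem, \cite[Theorem 1.2.2 and Proposition 1.2.6]{LPPS} give
\[
\nu_k L = \pi k - \frac{\pi}{2} - \frac{\pi^2}{8}\Bigl(\frac{1}{\alpha}+\frac{1}{\beta}\Bigr) + o(1), \qquad k \to \infty,
\]
and \cite[Theorem 1.3.2, Proposition 1.3.5]{LPPS} give the analogous expansion for $\eta_k$ but with the opposite sign of the corner correction. The strategy is to (i) invert these into two-term asymptotics for the counting functions $N^\Omega(\cdot,\CD_N)$ and $N^\Omega(\cdot,\CD_D)$, and then (ii) apply the Riesz iteration formula $R_\gamma(z) = \gamma\int_0^z (z-t)^{\gamma-1}N(t)\,dt$ recorded in \eqref{iteration}.

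Set $C := \tfrac{\pi}{8}(\tfrac{1}{\alpha} + \tfrac{1}{\beta})$. Direct inversion of the eigenvalue expansion yields
\[
N^\Omega(z,\CD_N) = \frac{Lz}{\pi} + C + \phi(z) + o(1), \qquad z \to \infty,
\]
where $\phi(z) := \tfrac{1}{2} - \{Lz/\pi + 1/2 + C\}$ is a sawtooth of period $\pi/L$ with zero mean and with uniformly bounded primitive $\Phi(z) := \int_0^z \phi(s)\,ds$. The Dirichlet case gives the same expansion with $C$ replaced by $-C$. Substituting into the Riesz iteration and using the Beta-function identity $\gamma B(\gamma,2) = 1/(\gamma+1)$, the linear and constant parts of $N$ contribute exactly $C_{2,\gamma}Lz^{\gamma+1}$ and $\pm C z^\gamma$, matching the claimed leading two terms.

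It remains to show that both the oscillatory $\phi(t)$-contribution and the $o(1)$ tail yield only $o(z^\gamma)$ after Riesz iteration. The $o(1)$ tail is controlled by a routine split-integral argument: given $\varepsilon > 0$, fix $T$ so that the remainder is smaller than $\varepsilon$ on $[T,\infty)$; the integrals over $[0,T]$ and $[T,z]$ are respectively $O(z^{\gamma-1})$ and at most $\varepsilon z^\gamma$. For the oscillatory part, one integration by parts in the case $\gamma \ge 1$ gives
\[
\gamma \int_0^z (z-t)^{\gamma-1}\phi(t)\,dt = \gamma(\gamma-1)\int_0^z (z-t)^{\gamma-2}\Phi(t)\,dt + \text{boundary} = O(z^{\gamma-1}),
\]
using the boundedness of $\Phi$; for $0 < \gamma < 1$ a direct period-by-period estimate, exploiting the zero-mean property of $\phi$ and the slow variation of $(z-t)^{\gamma-1}$ on $[0,z-1]$, together with a trivial bound on $[z-1,z]$, produces the same $O(z^{\gamma-1})$ estimate.

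The main obstacle is the bookkeeping in the inversion step: the passage from an asymptotic for $\nu_k$ to one for $N(z)$ introduces an unavoidable floor function, and to recover the clean coefficient $C$ rather than a shifted version of it one must verify carefully that the resulting sawtooth has mean exactly zero. It is precisely the cancellation between the "$+\tfrac{1}{2}$" produced by inverting the eigenvalue expansion and the "$-\tfrac{1}{2}$" coming from the mean of the fractional part that yields the coefficients $\pm\tfrac{\pi}{8}(1/\alpha + 1/\beta)$ in \eqref{eq:riesz1SN}--\eqref{eq:riesz1SD}.
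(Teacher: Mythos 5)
Your proposal is correct in substance and takes a genuinely different route from the paper. The paper works directly with the sum $\sum_k (z-\nu_k^\pm)^\gamma_+$: after writing $\nu_k^\pm = \frac{\pi}{L}k + C_\pm + r(k)$ with $r(k)=o(1)$, it reindexes the sum via a threshold $g(z) = \frac{L}{\pi}(z-C_\pm)+s(z)$, splits off a finite initial segment, and uses the Euler--Maclaurin formula on the tail $\sum_{K < k \le a_z-1}(k+\tau_z)^\gamma$. Your route instead passes through the counting function $N(z)$ and applies the Riesz iteration integral $R_\gamma(z) = \gamma\int_0^z (z-t)^{\gamma-1}N(t)\,dt$, isolating the linear, constant, sawtooth, and error contributions. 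Both rely on the same input from \cite{LPPS}, and both hinge on the same cancellation producing the coefficient $\pm\frac{\pi}{8}(\frac{1}{\alpha}+\frac{1}{\beta})$. Your approach has the advantage of conceptually separating the smooth and oscillatory pieces; the paper's avoids writing down a pointwise asymptotic for $N$ at all, staying entirely at the level of the sum.

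One step in your write-up should be sharpened. The claimed pointwise expansion $N^\Omega(z,\CD_N) = \frac{Lz}{\pi} + C + \phi(z) + o(1)$ is not literally correct: $N$ and $\frac{Lz}{\pi}+C+\phi(z) = \lfloor \frac{Lz}{\pi}+\frac12+C\rfloor$ are both integer-valued step functions, so their difference is integer-valued and cannot tend to zero pointwise; rather, $N$ jumps at $z=\nu_k$ while the model jumps at $z=\frac{\pi}{L}(k-\frac12-C)$, and these jump points differ by $|r(k)|=o(1)$. What is true is that the set $E_z\subset[0,z]$ where they disagree has measure $\lesssim \sum_{k\lesssim z}|r(k)| = o(z)$, and one must then argue that $\gamma\int_{E_z}(z-t)^{\gamma-1}\,dt = o(z^\gamma)$ (immediate for $\gamma\ge1$, a short extra estimate near $t=z$ for $0<\gamma<1$). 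This is essentially the point you flag in your final paragraph as ``the main obstacle''; the paper's choice to work with $g(z)$ and the sum sidesteps it. With this repair, your argument is complete.
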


\begin{proof}
 In order to be able to prove this statement in one proof for both problems at the same time, we will make the following notational convention : we write $\nu_k^+ := \eta_k$ for the eigenvalues of the Steklov-Dirichlet problem and $\nu_k^-:= \nu_k$ for those of the sloshing problem. Similarly, we write $\CD_+ := \CD_D$ and $\CD_- :=\CD_N$.
The conditions we are assuming are exactly those of \cite{LPPS} that yield the following asymptotics for $\nu_k^\pm$ :
 \begin{equation}
 \begin{aligned}
  \nu_k^\pm &= \frac{\pi}{L} \left(k - \frac 1 2 \right) \pm \frac{\pi^2}{8 L} \left(\frac1 \alpha + \frac 1 \beta\right) + r(k) \\
  &= \frac{\pi}{L} k + C_{\pm} + r(k).
  \end{aligned}
 \end{equation}
Moreover, $r(k) = \smallo 1$. This allows us to write
\begin{equation} \label{eq:riesz}
 R_\gamma^\Omega(z;\CD_{\pm}) = \sum_{0 \le \nu_k^\pm \le z} \left(z - \frac\pi L k - C_\pm - r(k)\right)^\gamma. 
\end{equation}
Observe that $\nu_k^\pm \le z$ if and only if
\begin{equation}
 k + r(k) \le \frac{L}{\pi}\left( z - C_\pm \right).
\end{equation}
Since $r(k) = \smallo 1$, there exists a function $s(z) = \smallo 1$ such that $\nu_k^\pm \le z$ if and only if $k \le g(z)$, for $g(z) := \frac{L}{\pi}(z - C_\pm) + s(z)$. Since we start counting eigenvalues at $k = 1$, this allows us to rewrite equation \eqref{eq:riesz} as
\begin{equation}
 R_\gamma^\Omega(z;\CD_\pm) = \sum_{1 \le k \le g(z)} \left(\frac{\pi}{L}\right)^\gamma\left(g(z) -  k - s(z) - \tilde r(k)\right)^\gamma,
\end{equation}
where $\tilde r = L \pi^{-1} r$. From now on we make use of the strategy of the proof by Lagac\'e and Parnovski for \cite[Theorem 1.6]{LP}. Let us write $g(z) = a_z + \tau_z$ where $a_z$ is the integer part and $\tau_z$ the fractional part, and rewrite the previous sum as
\begin{equation}
  R_\gamma^\Omega(z;\CD_\pm) =\left(\frac \pi L\right)^\gamma \sum_{0 \le k \le a_z - 1}  (k + \tau_z - s(z) - \tilde r(k))^{\gamma}. 
\end{equation}
Since $\tilde r(k) = \smallo 1$, there exists some $K$ such that for all $k > K$, $\tilde r(k) < 1/4$. We split the sum into
\begin{equation}
\sum_{0 \le k \le a_z - 1}  (k + \tau_z - s(z) - \tilde r(k))^{\gamma}  = \left(\sum_{0 \le k \le K} + \sum_{K < k \le a_z - 1}\right) (k + \tau_z - s(z) - \tilde r(k))^{\gamma}.
\end{equation}

We have that
\begin{equation}
 \begin{aligned}
  \sum_{0 \le k \le K} \left(k + \tau_z - s(z) - \tilde r(k) \right)^\gamma &\le K\left(K + 1 + \smallo z + \inf_{k \le K} r(k)\right)^\gamma \\
  &= \smallo{z^{\gamma}}.
 \end{aligned}
\end{equation}

For the second sum, suppose that we have chosen $z$ large enough that $s(z) < K/4$. Then, we have that
\begin{equation}
 \begin{aligned}
  \sum_{K < k \le a_z - 1} \left(k + \tau_z - s(z) - \tilde r(k) \right)^\gamma &= 
  \sum_{K < k \le a_z - 1} (k + \tau_z)^{\gamma}\left(1 - \frac{s(z) - \tilde r(k)}{k + \tau_z} \right)^\gamma \\
  & =  \sum_{K < k \le a_z - 1}\left( (k + \tau_z)^{\gamma}  - s(z) \: \bigo{k^{\gamma - 1}} + \smallo{k^{\gamma - 1}}\right), \\
  &= \smallo{z^{\gamma}} + \sum_{K < k \le a_z - 1} (k + \tau_z)^{\gamma}
 \end{aligned}
\end{equation}

Finally, the Euler-Maclaurin formula tells us that
\begin{equation}
\begin{aligned}
  \sum_{K < k \le a_z - 1} \left(k + \tau_z\right)^\gamma  &= \int_{K+1}^{a_z-1} (k + \tau_z)^\gamma \de k + \frac{1}{2} \left((K+1 + \tau_z)^\gamma + (g(z) - 1)^\gamma\right) + \bigo{z^{\gamma - 1}} \\
 &= \int_{K + 1 + \tau_z}^{g(z) - 1} k^{\gamma} \de k + \frac{1}{2} \left((K+1 + \tau_z)^\gamma + (g(z) - 1)^\gamma\right) + \bigo{z^{\gamma - 1}} \\
 &= \frac{1}{\gamma + 1} (g(z) - 1)^{\gamma + 1} + (g(z) - 1)^{\gamma} + O(1) + \bigo{z^{\gamma - 1}}
 \end{aligned}
\end{equation}
We reexpand $g(z)$ and $C_\pm$ and collect all terms together to obtain directly that
\begin{equation}
 R_\gamma^\Omega(z;\CD_N) =  C_{2,\gamma}L z^{\gamma + 1} + \frac{\pi}{8} \left(\frac 1 \alpha + \frac 1 \beta \right)z^\gamma + \smallo{z^{\gamma}},
\end{equation}
and
\begin{equation}
  R_\gamma^\Omega(z;\CD_D) = C_{2,\gamma}L z^{\gamma +1} - \frac{\pi}{8} \left(\frac 1 \alpha + \frac 1 \beta \right)z^{\gamma} + \smallo{z^\gamma}
\end{equation}
finishing the proof.
\end{proof}


\end{document}